\newtheorem{theorem}{\textsc{Theorem}}[section]
\newtheorem{definition}[theorem]{\textsc{Definition}}
\newtheorem{example}[theorem]{\textsc{Example}}
\newtheorem{proposition}[theorem]{\textsc{Proposition}}
\newtheorem{corollary}[theorem]{\textsc{Corollary}}
\newtheorem*{remark}{\textsc{Remark}}
\newcommand{\id}{\operatorname{id}}
\newcommand{\Tot}{\operatorname{Tot}}
\newcommand{\TW}{\operatorname{Tot_{TW}}}
\newcommand{\Del}{\operatorname{Del}}
\newcommand{\Delf}{\operatorname{Del_\infty}}
\newcommand{\op}{\operatorname}
\newcommand{\de}{\partial}
\newcommand{\g}{\mathfrak{g}}
\newcommand{\gh}{\mathfrak{\tilde{g}}}
\newcommand{\ghu}{\mathfrak{\tilde{g}}(\mathcal{U})_\bullet}
\newcommand{\h}{\mathfrak{h}}
\newcommand{\f}{\mathfrak{f}}
\newcommand{\ada}{\operatorname{ad}_{a}}
\newcommand{\MC}{\operatorname{MC}}
\begin{document}

\title{Curved $L_\infty$-algebras and lifts of torsors.} 
\author{Vladimir Baranovsky and Ka Laam Chan} 

\maketitle 

\begin{abstract}
	
	Consider an extension of finite dimensional nilpotent Lie algebras 
	$0 \to \h \to \gh \to \g \to 0$ (over a field $k$ of characteristic zero) corresponding to an extension of unipotent algebraic groups $1 \to H \to \tilde{G} \to G \to 1$. 
	For a $G$-torsor $P$ on an algebraic variety $X$ over $k$, we study the problem of 
	lifting $P$ to a
	 $\widetilde{G}$-torsor $\widetilde{P}$. 
	 Fixing a trivialization of $P$ on open subsets of an affine cover, we give 
	  the Cech complex of $\h$-valued functions the structure of a curved 
	 $L_\infty$-algebra and define a curved version of the Deligne-Getzler groupoid. 
	 We show that this groupoid is isomorphic 
	  the groupoid of cocycle level $\tilde{G}$-lifts of $P$. 
\end{abstract}

\section{Introduction.} 

The purpose of this article is to give a curved version of a result due to 
Hinich, cf. \cite{hinich} on descent of Deligne groupoids, or rather its variant proved by 
Fiorenza-Manetti-Martinengo in \cite{manetti}. 
We will describe a specifc setting 
of interest. Suppose we are given a  scheme over a field $k$
of characteristic zero, an open
 affine covering 
$\mathcal{U} = \{U_i\}$ and a finite dimensional Lie algebra $\g$ with the
corresponding unipotent algebraic  group $G$. Then any $G$-torsor  $P \to G$ 
(in either Zariski of fppf topology) trivializes on each open subset $U_i$. 
For $G = \mathbb{G}_a$ this follows from the vanishing of cohomology of
the structure sheaf on each $U_i$ and in general follows by using a
filtration on $G$ with successive quotients isomorphic to $\mathbb{G}_a$. 
Therefore, given a fixed trivialization of $P$
on each $U_i$, all information about $P$ can be recovered from the transition 
functions 
$
\Phi_{ij}: U_i \cap U_j \to G
$
subject to the cocycle condition
$$
\Phi_{ik} = \Phi_{ij} \Phi_{jk}
$$
on triple intersections. As usual, a change of trivialization is described by regular 
maps $\Sigma_i: U_i \to G$ which send $\{\Phi\}_{ij}$ to an equivalent cocycle
$$
\widehat{\Phi}_{ij} = \Sigma_i^{-1} \Phi_{ij} \Sigma_j.
$$
On the other hand, the Cech complex $\mathcal{L}(\g)$ of the sheaf of $\g$-valued
functions, with respect to  $\mathcal{U}$, does not inherit the Lie 
algebra structure from $\g$ but it can be given the structure of an $L_\infty$ algebra.
With appropriate choices, the cocycle condition above can be identified with the 
Maurer-Cartan equation of $\mathcal{L}(\g)$  and equivalences of cocycles with 
arrows in the Deligne-Getzler groupoid. 

In more detail: on one hand, the $L_\infty$ structure on the Cech complex
depends on the choice of a simplicial Dupont homotopy, see \cite{getzler}.  On the other
hand, to convert the group valued cocycle condition into a statement involving the Lie algebra 
one must describe the group product in $G$ via the bracket in $\g$. This is done 
by the Campbell-Baker-Hausdorf formula, which can be modified to a family 
of formulas using the Jacobi identity on the bracket of $\g$. As it turns out, 
a choice of Dupont homotopy also fixes a CBH type formula in this family. This can be 
seen from the work of Getzler, cf. \cite{getzler} who has related the 
CBH formula to a horn lifting condition in a  Kan groupoid. The two choices
(of the $L_\infty$ structure on Cech complex and the description of group product in
terms of brackets) turn out to be coordinated so that the cocycle condition can be 
matched to the Maurer-Cartan equation.  See  \cite{manetti} for a more
general statement.

\bigskip
\noindent
We are interested in the problem of lifting a torsor across a Lie algebra extension. Thus, 
we consider an extension of nilpotent Lie algebras 
$$
0 \to \h \to \widetilde{\g} \to \g \to 0
$$
corresponding to the extension of unipotent Lie groups $H \to \widetilde{G} \to G$, 
and study the problem of lifting $P$ to a $\widetilde{G}$ torsor $\widetilde{P}$. 

This kind of problem appears in a range of geometric situations, such as 
infnitesimal extension of 
a vector bundle or a torsor from a smooth subvariety $X$ to a smooth ambient 
variety $Y$, or finding a 
deformation quantization of a vector bundle on $X$ as a module over a fixed
Zariski sheaf $\mathcal{O}_h$ of quantized functions on $X$. For full power
appropriate to such applications one should rather work with groups and 
Lie algebras which are infinite-dimensional proalgebraic groups with the finite 
dimensional reductive Levi part. In this paper, however, we restrict to the simple finite
dimensional unipotent/nilpotent setting, just observing that the case of the
nontrivial redutive part $G_L$ would be resolved by working with $G_L$-invariant
objects on appropriate $G_L$-torsor oveer $X$. See e.g. Section 5.1 in 
\cite{NT} for an example. 

To formulate our result we also choose a splitting $\widetilde{\g} \simeq \g 
\oplus \h$ as vector spaces. Since each Lie algebra can be identified with its
 group as an algebraic variety, we also get a splitting of algebraic
varieties $\widetilde{G} \simeq G \times H$ (the group operation is 
compatible with the embedding of $H$ and the projection to $G$). 

\begin{theorem}
	\label{main}
	For a fixed cocycle definiing $P$ and the choices made above, there exists
	a curved complete $L_\infty$ algebra structure on the Cech complex 
	$\mathcal{L}(\h)$ of $\h$-valued functions, such that  its curved Maurer-Cartan 
	elements are in bijective correspondence 
	with $\widetilde{G}$-valued lifts of the cocycle defining $P$, 
	and homotopy equivalences between 
	different solutions are in bijective correspondence with $H$-valued changes of
	trivialization. 
\end{theorem}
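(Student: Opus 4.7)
The plan is to reduce the theorem to the uncurved Fiorenza-Manetti-Martinengo result \cite{manetti} applied to the total Lie algebra $\gh$, and then to twist the resulting $L_\infty$-structure on $\mathcal{L}(\gh)$ by (a lift of) the $G$-cocycle $\{\Phi_{ij}\}$, producing a curved $L_\infty$-structure on the subcomplex $\mathcal{L}(\h)$ whose Maurer-Cartan and gauge data match the lifting problem.

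First I would fix a Dupont homotopy and apply the FMM construction to $\gh$, obtaining an $L_\infty$-structure on $\mathcal{L}(\gh)$ together with the matching CBH-type formula that identifies MC elements with $\widetilde{G}$-valued cocycles and gauge equivalences with changes of trivialization. The vector space splitting $\gh \simeq \g \oplus \h$ induces a decomposition $\mathcal{L}(\gh) \simeq \mathcal{L}(\g) \oplus \mathcal{L}(\h)$. Via this decomposition, the given cocycle $\{\Phi_{ij}\}$ picks out a distinguished element $\phi \in \mathcal{L}(\g)^1 \subset \mathcal{L}(\gh)^1$, namely the one corresponding under FMM to the naive lift $\{(\Phi_{ij},0)\}$ of the cocycle to $\widetilde{G} \simeq G \times H$. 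Since the splitting is only one of varieties, this naive lift need not satisfy the $\widetilde{G}$-cocycle condition and, correspondingly, $\phi$ need not solve the MC equation in $\mathcal{L}(\gh)$; however it does so modulo $\mathcal{L}(\h)$.

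Next I would apply the standard $L_\infty$ twisting procedure at $\phi$. The twisted brackets
$$
\ell_n^\phi(x_1,\dots,x_n) \;=\; \sum_{k \geq 0} \tfrac{1}{k!}\, \ell_{n+k}(\phi^{\otimes k}, x_1,\dots,x_n)
$$
together with the curvature
$$
c_\phi \;=\; \sum_{k \geq 1} \tfrac{1}{k!}\, \ell_k(\phi^{\otimes k})
$$
give a curved $L_\infty$-structure $\mathcal{L}(\gh)^\phi$. Using that $\h$ is an ideal of $\gh$ and that $\phi$ is MC modulo $\mathcal{L}(\h)$, one checks that $\mathcal{L}(\h)$ is preserved by every $\ell_n^\phi$ with $n \geq 1$ and that $c_\phi$ lies in $\mathcal{L}(\h)^2$. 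This produces the desired curved $L_\infty$-structure on $\mathcal{L}(\h)$. By construction, a curved MC element $\psi \in \mathcal{L}(\h)^1$ is precisely an element for which $\phi + \psi$ is a flat MC element of $\mathcal{L}(\gh)$; under FMM these are exactly the $\widetilde{G}$-cocycles whose image in $G$ is $\{\Phi_{ij}\}$, i.e.\ the lifts of $P$. Similarly, $H$-valued changes of trivialization are exactly the gauge equivalences of $\mathcal{L}(\gh)$ that fix the $G$-component, and after twisting by $\phi$ they become homotopy equivalences in the curved Deligne-Getzler groupoid of $\mathcal{L}(\h)$.

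The main obstacle I expect is the compatibility between the variety-level splitting $\widetilde{G} \simeq G \times H$ and the CBH-type identification of group cocycles with MC elements. Since the splitting is not multiplicative, translating a pair $(\Phi_{ij}, \Psi_{ij})$ into a sum $\phi + \psi \in \mathcal{L}(\gh)$ involves a definite choice of ordering of factors in the CBH formula, and one must verify that this ordering can be chosen compatibly with the Dupont homotopy fixed at the outset, so that the $H$-parts $\Psi_{ij}$ correspond cleanly to $\mathcal{L}(\h)$-valued MC corrections $\psi$ and $H$-valued gauges are matched with $\mathcal{L}(\h)$-valued homotopies. This is the curved analogue of Getzler's horn-lifting argument relating CBH to Kan conditions, and it is where the essential bookkeeping of the proof will be concentrated.
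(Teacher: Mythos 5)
Your proposal is correct in outline but takes a genuinely different route from the paper. You work entirely on the Cech side: apply Fiorenza--Manetti--Martinengo to $\gh$ to get the flat $L_\infty$ structure on $\mathcal{L}(\gh)$, then twist at the element $\phi\in\mathcal{L}(\g)^1\subset\mathcal{L}(\gh)^1$ determined by the cocycle, and restrict the twisted brackets and curvature to the ideal-like subspace $\mathcal{L}(\h)$. The paper instead twists \emph{before} transferring: it works in the Thom--Whitney DGLA $\hat{\mathcal{L}}(\gh)=\hat{\mathcal{L}}(\g)\oplus\hat{\mathcal{L}}(\h)$, where the Kuranishi lift $a$ of the cocycle is an honest MC element of a genuine DGLA, so that the twist produces an explicit \emph{curved DGLA} on $\hat{\mathcal{L}}(\h)$ with differential $d+\op{ad}_a$ and curvature $\frac12 c(a,a)$ read off from the non-abelian $2$-cocycle $(c,b)$ of the extension; it then invokes the Fukaya--Getzler curved homotopy transfer and curved formal Kuranishi theorems (checking pro-nilpotence via the Cech filtration) to land on $\mathcal{L}(\h)$, and compares with the explicit twisted cocycle condition \eqref{twisted-cocycle}. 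Your route buys elementariness --- only the uncurved FMM theorem plus algebraic twisting, no curved transfer machinery --- while the paper's route buys an explicit description of the curved structure in terms of $(c,b)$ and a ready-made bijection of MC sets from the curved Kuranishi theorem; the two curved structures on $\mathcal{L}(\h)$ need not coincide on the nose, but either proves the existence statement. Your closure claims do hold (iterated $\gh$-brackets with one entry in $\h$ land in $\h$, and the curvature $c_\phi$ projects to the vanishing curvature of the $G$-cocycle in $\mathcal{L}(\g)$), and pro-nilpotence of your twisted structure follows from the same filtration-by-overlaps argument as in the paper's remark. The two places where your sketch is thin are exactly where the paper spends its effort: (i) the morphism level, where one must cut down to homotopies whose $\mathcal{L}(\g)$-component is constantly $\phi$ and verify via $2$-simplices/horn filling that compositions (CBH products of $H$-valued gauges) are respected, and (ii) the translation between your additive parametrization $\exp(\varphi_{ij}+\psi_{ij})$ of lifts and the paper's factored form $\exp(\varphi_{ij})\exp(\psi_{ij})$ --- both parametrize lifts bijectively, so the theorem as stated follows either way, but your final worry about orderings in CBH only matters if one wants the explicit formulas \eqref{twisted-cocycle}--\eqref{twisted-equivalence}.
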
 

We note that curved Maurer-Cartan solutions behave most reasonably in a complete
filtered setting, and while we were able to define the Deligne-Getzler groupoid in 
our particular situation, we are not aware of any general appropriate construction 
that implies our case. 

The structure of this paper is as follows. In Section 2 we recall definitions and fix notation 
related to $L_\infty$ algebras, Maurer-Cartan solutions and their equivalence, and formulate
results related to homotopy transfer of structure under a contraction. In Section 3
we discuss the construction of an $\infty$-groupoid associated with an $L_\infty$
algebra (which we assume to be  non-negatively graded). We also recall the relationship between CBH and horn filling. Section 4 
discusses the general theorem of Hinich on descent of Deligne groupoids
(for a semicosimplicial DGLA) and its more specific version for semicosimplicial 
Lie algebras, due to Fiorenza-Manetti-Martinengo. We explain how the latter result 
relates to deformation of $G$-torsors. Finally, in Section 5-7 we extend it to the curved
setting and compare curved Maurer-Cartan equations to 
lifts of cocycles across extensions. Definitons and results related to curved $L_\infty$
algebras are collected in Section 8 (appendix).

\section{Complete $L_\infty$ algebras and Maurer-Cartan 
	equations.}

\subsection{$L_\infty$ Algebras and $L_\infty$ morphisms}

We follow the notation used in \cite{bthesis}.

\begin{definition}
	An \(\mathbf{L_\infty}\) \textbf{structure} on a graded vector space \(V\) is  a codifferential \(Q\) of degree 1 on the (reduced) symmetric coalgebra \(\overline{S(V[1])} 
	= \bigoplus_{i \geq 1} V[1]^{\odot i} \) .
	The pair $(V, Q)$ is called an \(\mathbf{L_\infty}\) \textbf{algebra}.
\end{definition}

Note that the codifferential \(Q\) is determined by \(Q^1: \overline{S(V[1])} \to V\), see Corollary VIII.34 in \cite{manettilec}.  If we break the map apart, we will get maps \(q_1 = Q^1_1: V \to V\), \(q_2 = Q^1_2: V\odot V \to V\), \(q_3 = Q^1_3: V \odot V \odot V \to V\), etc. 
The fact that $Q^2 = 0$ implies that the $q_i$'s have to satisfy a series of equations,  called the generalized Jacobi identities. In particular $q_1^2=0$.

\begin{definition}
	An \textbf{\(\mathbf{L_\infty}\) morphism} of 
	$L_\infty$ algebras \(F: (V,Q) \to (W,R)\)  is a morphism of dg-coalgebras \(G: \overline{S(V[1])} \to \overline{S(W[1])}\) which is given by a family of degree zero maps $g_i = G^1_i:V[1]^{\odot i} \to W[1]$, $i\geq 1$, such that \(G: \overline{S(V[1])} \to \overline{S(W[1])}\) commutes with $Q$ and $R$.
 \(F\) is  a \textbf{strict morphism} if $g_i = 0$ for $i \geq 2$.  
	\(F\)  is an \(\mathbf{L_\infty}\) \textbf{quasi-isomorphism}
	if \(g_1:(V[1],q_1) \to (W[1],r_1)\) is a quasi-isomorphism of 
	underlying complexes.
\end{definition}

In this paper, we work with algebras that have a complete filtration.  Completeness is not needed for the homotopy transfer of structure theorem, but it is essential in the proof of the formal Kuranishi theorem.  

\begin{definition}
	A \textbf{complete graded space} is a graded space $V$ equipped with a descending filtration $F^\bullet V$,
	\[V = F^1V \supset \dots \supset F^p V \supset \dots\]
	such that $V$ is complete in the induced topology, i.e. the natural $V \to \underleftarrow{\operatorname{lim}}V/F^{\bullet}V$ is an isomorphism of graded spaces.  Given complete graded spaces $(W, F^{\bullet}W)$ and $(V, F^{\bullet}V)$, a \textbf{continuous} map of graded spaces is a map $g: W \to V$ such that $g(F^pW) \subset F^pV$ for all $p \geq 1$. A \textbf{complete dg space} is a complete graded space $(V, F^{\bullet}V)$ with a continuous differential $d$.
\end{definition}

\begin{definition}
	
	A \textbf{complete} $\mathbf{L_\infty}$ \textbf{algebra} is a complete graded space $(V, F^{\bullet}V)$ with an $L_\infty$ structure $Q$ on $V$ such that $q_i$'s are continuous, i.e. $q_i(F^{p_1}V[1] \odot \dots \odot F^{p_i}V[1]) \subset F^{p_1 + \dots + p_i}V[1]$, for all $i, p_1, \dots, p_i \geq 1$.
	A \textbf{continuous} $\mathbf{L_\infty}$ \textbf{morphism} $G:(W, F^{\bullet}W, R) \to (V, F^{\bullet}V, Q)$ between complete \(L_\infty\) algebras  is an $L_\infty$ morphism $G: (W,R) \to (V,Q)$ such that $g_i$'s are continuous, i.e. $g_i(F^{p_1}W[1] \odot \dots \odot F^{p_i}W[1]) \subset F^{p_1 + \dots + p_i}V[1]$, for all $i, p_1, \dots, p_i \geq 1$.
\end{definition}

\begin{remark}
	From now on we assume completeness of algebras and morphisms, 
	unless otherwise specified. 
\end{remark}

\begin{definition}
	The \textbf{curvature} of a complete $L_\infty$ algebra $(V, F^\bullet V, Q)$ is the map 
	\[ x \mapsto 
	\mathcal{R}_V(x) = \displaystyle\sum_{i \geq 1} \frac{1}{i!} q_i (\overbrace{x \odot \dots \odot x}^i):  V^1 \to V^2\]
	Note that the infinite sum above converges because $(V,F^\bullet V,Q)$ is complete.
	The \textbf{Maurer Cartan set} of a complete $L_\infty$ algebra $(V, F^\bullet V, Q)$ is the set 
	\[\op{MC}(V):= \{x \in V^1 \mbox{ s.t. } \mathcal{R}_V(x) = 0\}.\]
\end{definition}
A continuous $L_\infty$ morphism $G$ induces a map, see e.g. \cite{bthesis}:
 $$
 G_*:\op{MC}(W) \to \op{MC}(V), 
x\mapsto G_\ast(x) = \sum_{i\geq 1}\frac{1}{i!}g_i(\overbrace{x \odot \dots \odot x}^i) 
$$
Gauge equivalence used with a Maurer-Cartan solutions of a positively graded
DGLAs is not defined on Maurer Cartan solutions of an $L_\infty$ algebra $L$ as $L^0$ is not a Lie algebra in general.  Instead, we will use the following equivalence for Maurer Cartan solutions,
given in terms of the induced $L_\infty$ algebra $V\otimes_k k[s,ds]$ where 
$k[s,ds]$ is the graded commutative algebra on a degree 0 variable $s$, 
and a degree 1 variable $ds$ subject to $d(s) = ds$:
\begin{definition}
	Two Maurer Cartan solutions $a,a' \in \MC(V)$ are \textbf{(homotopy) equivalent} if there exist $z \in \MC(V\otimes_k k[s,ds])$ such that
	\[z|_{s=0} = a, \qquad \qquad z|_{s=1} = a'\]
	where the evaluation map is given by $\op{Eval}_{s = s_0}: V \otimes_k k[s,ds] \to V$ 
	\[\op{Eval}_{s=s_0}(x(s)+y(s)ds) = x(s_0)\]
\end{definition}

\subsection{Homotopy Transfer and Formal Kuranishi Theorem}
In this section we will review the homotopy transfer of structure theorem and observe how the Maurer Cartan set behaves under homotopy transfer.

\begin{definition} A \textbf{complete contraction} 
	\[
	\begin{tikzcd}
		W \arrow[r,shift left, "f"] \arrow[r, leftarrow, shift right, "g", swap] & V \arrow[loop right, leftarrow]{l}{K}
	\end{tikzcd}
	\]
	is a complete dg space $(V,F^\bullet V,d_V)$ and a dg space $(W,d_W)$, together with dg morphisms $f:(W,d_W)\to (V,d_V)$, $g:(V,d_V)\to(W,d_W)$ and a contracting (degree minus one) homotopy $K:V\to V$, such that 
	\begin{itemize}
		\item  $gf=\id_W$ and       $Kd_V+d_VK=fg-\id_V$
		\item $K$ satisfies the side conditions $Kf = K^2 = gK= 0$
		\item $K$ and $fg$ are continuous with respect to the filtration $F^\bullet V$ on $V$.
	\end{itemize}
	Then $W$ is equipped with the induced filtration $F^pW = f^{-1}(F^pV)$ such that  $f,g$ are continuous morphisms.
\end{definition}

We now state the homotopy transfer of structure theorem, see e.g. \cite{barticle}.  
It can be used for constructing $L_\infty$ structures on a dg vector space. 

\begin{theorem} 
	Given a complete contraction 
	$\begin{tikzcd}
		W[1] \arrow[r,shift left, "f_1"] \arrow[r, leftarrow, shift right, "g_1", swap] & V[1] \arrow[loop right, leftarrow]{l}{K}
	\end{tikzcd}$
	and a complete $L_\infty$ algebra structure $Q$ on $(V,F^\bullet V)$ with linear part $q_1=d_{V[1]}$, there is an induced complete $L_\infty$ algebra structure $R$ on $(W,F^\bullet W)$ with linear part $r_1=d_{W[1]}$, together with continuous $L_\infty$ morphisms $f:(W,R)\to(V,Q)$, $g:(V,Q)\to (W,R)$ with linear parts $f_1$, $g_1$ respectively. Denoting by $f^k_i$ the composition $W[1]^{\odot i}\hookrightarrow\overline{S(W[1])}\xrightarrow{f}\overline{S(V[1])}\twoheadrightarrow V[1]^{\odot k}$, $f$ and $R$ are determined recursively by 
	\[ f_i = \sum_{k=2}^i Kq_k f^k_i \qquad
	r_i = \sum_{k=2}^i g_1q_k f^k_i \qquad \mbox{for $i\geq2$}. \]	
	We denote by $K_i^{\Sigma}:V[1]^{\odot i}\to V[1]^{\odot i}$ the degree minus one map defined by 
	\[ K^\Sigma_i(v_1\odot\cdots\odot v_i) =\]
	\[\frac{1}{i!} \sum_{\sigma\in S_i, j = 0, \ldots, i} \pm f_1g_1(v_{\sigma(1)})\odot\cdots\odot f_1g_1(v_{\sigma(j-1)})\odot K(v_{\sigma(j)})\odot v_{\sigma(j+1)}\odot\cdots\odot v_{\sigma(i)}, \]
	where $\pm$ is the appropriate Koszul sign (taking into account that $|K|=-1$).  Denoting by $Q^k_i$ the composition $V[1]^{\odot i}\hookrightarrow\overline{S(V[1])}\xrightarrow{Q}\overline{S(V[1])}\twoheadrightarrow V[1]^{\odot k}$, the $L_\infty$ morphism $g$ is determined recursively by
	\[ g_i = \sum_{k=1}^{i-1} g_k Q^k_i K^\Sigma_i\qquad\mbox{for $i\geq2$}.  \]
\end{theorem}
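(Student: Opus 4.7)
My plan is to deduce the theorem from the homological perturbation lemma applied at the level of reduced symmetric coalgebras. First I would extend $f_1, g_1$ to coalgebra morphisms $F_1 = \bigoplus_{i \geq 1} f_1^{\odot i}: \overline{S(W[1])} \to \overline{S(V[1])}$ and $G_1$ in the opposite direction, and extend $d_{V[1]}$ and $d_{W[1]}$ to their unique codifferential extensions $\widetilde{q_1}$ and $\widetilde{r_1}$ on the two coalgebras. The operator $K^\Sigma_i$ given in the statement is, up to the factor $1/i!$, precisely the symmetrization appropriate for extending the contracting homotopy $K$ to each piece $V[1]^{\odot i}$; the side conditions $K f_1 = K^2 = g_1 K = 0$ are exactly what is required for this symmetrized expression to satisfy the contraction identities on every symmetric power.

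With the extended contraction in place, I would view $Q$ on $\overline{S(V[1])}$ as a perturbation of $\widetilde{q_1}$ by the coderivation $P$ encoding $q_2, q_3, \ldots$, which strictly raises the natural filtration on the coaugmentation-free part. The perturbation lemma then produces a perturbed codifferential $R$ on $\overline{S(W[1])}$, i.e. an $L_\infty$ structure on $W$, together with coalgebra morphisms $F, G$ intertwining the codifferentials, i.e. continuous $L_\infty$ morphisms extending $f_1, g_1$. Expanding $F = F_1 + \sum_{n \geq 1} (\mathcal{K} P)^n F_1$ and projecting onto a single output tensor factor, the side condition $K f_1 = 0$ collapses the series to the tree-shaped recursion $f_i = \sum_{k=2}^i K q_k f^k_i$. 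Similarly $R = \widetilde{r_1} + G_1 P F$ together with $g_1 K = 0$ yields $r_i = \sum_{k=2}^i g_1 q_k f^k_i$, and the dual series for $G$ yields $g_i = \sum_{k=1}^{i-1} g_k Q^k_i K^\Sigma_i$.

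Continuity of $f, g, R$ with respect to $F^\bullet W$ and $F^\bullet V$ is automatic, since $f_1, g_1, K$ are continuous by hypothesis and each $q_k$ preserves total filtration degree, so every term in the recursion is a continuous composition; completeness of $V$ guarantees convergence of the perturbation series in each filtration layer. The step I expect to be the main obstacle is verifying that $K^\Sigma_i$ really does contract $\widetilde{q_1}$ on each $V[1]^{\odot i}$ with the correct Koszul signs: the contraction identity expands into a large alternating sum whose cancellations rely delicately on the side conditions $K f_1 = K^2 = g_1 K = 0$, on the coderivation property of $\widetilde{q_1}$, and on the symmetrization over $S_i$. This is a finite but sign-sensitive combinatorial check at each arity $i$; once it is dispatched, the remainder of the argument is a routine application of the perturbation lemma in the complete filtered setting, combined with the standard equivalence between codifferentials on the reduced symmetric coalgebra and $L_\infty$ structures.
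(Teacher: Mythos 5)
The paper does not actually prove this theorem: it is stated as a known result and attributed to the literature (Bandiera's \cite{barticle}, building on Getzler), so there is no in-paper argument to compare against line by line. Your route --- extend the contraction to the reduced symmetric coalgebras via the symmetrized homotopy $K^\Sigma$, treat $Q-\widetilde{q_1}$ as a perturbation that strictly lowers word length (hence is locally nilpotent on $\overline{S(V[1])}$, so the perturbation series converges without even invoking completeness, consistent with the paper's remark that completeness is not needed here), and then read off the components --- is the standard homological-perturbation proof, and your bookkeeping is right: projecting $F=F_1+\mathcal{K}PF$ onto the one-factor output gives $f_i=\sum_{k=2}^i Kq_kf^k_i$, projecting $R=\widetilde{r_1}+G_1PF$ gives $r_i=\sum_{k=2}^i g_1q_kf^k_i$, and projecting $G=G_1+GP\mathcal{K}$ gives $g_i=\sum_{k=1}^{i-1}g_kQ^k_iK^\Sigma_i$, exactly the stated recursions. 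The continuity claims are also fine, since every term in each recursion is a composite of continuous maps and the recursions are finite at each arity.

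There is one genuine gap beyond the sign check you already flag. The basic perturbation lemma only produces perturbed \emph{chain} maps and a perturbed differential; it does not by itself guarantee that $F$ and $G$ are morphisms of coalgebras and that $R$ is a \emph{coderivation} --- which is precisely what is needed for the output to be an $L_\infty$ structure and $L_\infty$ morphisms rather than bare chain-level data. This compatibility is a theorem in its own right (the ``coalgebra perturbation lemma''): it holds because $K^\Sigma$ is not just any contracting homotopy on $\overline{S(V[1])}$ but one compatible with the coproduct in the appropriate sense, and verifying that compatibility (or, alternatively, proving directly by induction on arity that the recursively defined $f_i$, $r_i$, $g_i$ assemble into coalgebra morphisms and a codifferential, which is essentially what Bandiera does) is a step of comparable weight to the contraction identities for $K^\Sigma$. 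Your plan should name and dispatch this point explicitly; as written it is silently absorbed into ``the perturbation lemma then produces a perturbed codifferential $R$ \ldots together with coalgebra morphisms $F,G$,'' which is the one place the argument asserts more than the cited lemma delivers.
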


\medskip
\noindent
The following theorem is essentially due to Getzler \cite{getzler} explains what happens to the 
Maurer Cartan set under the homotopy transfer. It is stated in this form by Bandiera in
\cite{barticle}, where it is called the  formal Kuranishi theorem.  

\begin{theorem}
	Under the hypothesis of the homotopy transfer of structure theorem, the correspondence
	\[\rho:\op{MC}(V)\to \op{MC}(W)\times K(V^1):x\to(\op{MC}(g)(x),K(x))\]
	is bijective. The inverse $\rho^{-1}$ admits the following recursive construction: for $y\in \op{MC}(W)$ and $K(v)\in K(V^1)$, define $x_n\in V^1$, $n\geq 0$, by $x_{0}=0$ and 
	\[x_{n+1}=f_1(y)-q_1K(v)+\sum_{i\geq2}\frac{1}{i!}\left(Kq_i-f_1g_i\right)\left(x_n^{\odot i}\right).\]
	This sequence converges (with respect to the complete topology on $V$) to a well defined $x\in V^1$, and we have  $\rho^{-1}(y,K(v))=x$. Finally, $\rho^{-1}(-,0)= \op{MC}(f): \op{MC}(W)\to \op{MC}(V)$ is a bijection $\op{MC}(W) \to \op{Ker}\,K\bigcap \op{MC}(V)$, whose inverse is the restriction of $g_1$.
\end{theorem}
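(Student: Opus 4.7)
The plan is to invert $\rho$ by constructing $x$ explicitly as the limit of the recursion and then verifying that this $x$ lies in $\op{MC}(V)$ with $\rho(x) = (y, K(v))$. First I would derive the fixed-point equation satisfied by any preimage. Starting from the contraction identity $x = f_1 g_1(x) - K q_1(x) - q_1 K(x)$, substitute $q_1(x) = -\sum_{i \geq 2}\frac{1}{i!}q_i(x^{\odot i})$ (valid for $x \in \op{MC}(V)$) and $g_1(x) = \op{MC}(g)(x) - \sum_{i \geq 2}\frac{1}{i!}g_i(x^{\odot i})$ (by definition of $\op{MC}(g)$), and impose $\op{MC}(g)(x) = y$, $K(x) = K(v)$. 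This rearranges into exactly $x = \Phi(x)$, where $\Phi$ denotes the right-hand side of the recursion, so any preimage of $(y, K(v))$ is forced to be a fixed point of $\Phi$ and injectivity follows once the fixed point is shown to be unique.

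Convergence of $x_{n+1} = \Phi(x_n)$ starting from $x_0 = 0$ is a filtration estimate. Every nonlinear summand uses $q_i$ or $g_i$ with $i \geq 2$, which strictly increase filtration by continuity. An induction shows $x_{n+1} - x_n \in F^{n+1}V$: expand $x_{n+1}^{\odot i} - x_n^{\odot i}$ as a sum of tensors each carrying $x_{n+1} - x_n$ as one factor while the remaining $i-1 \geq 1$ factors lie in $V = F^1 V$, so continuity sends $q_i$ and $f_1 g_i$ of the difference into $F^{n+i}V \subseteq F^{n+2}V$. Completeness yields the limit $x = \lim_n x_n \in V^1$ with $\Phi(x) = x$, and the same estimate forces uniqueness.

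It remains to verify $x \in \op{MC}(V)$ and $\rho(x) = (y, K(v))$. Substituting $f_1 g_1 = \id_V + K q_1 + q_1 K$ back into the fixed-point equation and collecting terms gives the key identity
$$f_1\bigl(y - \op{MC}(g)(x)\bigr) + q_1\bigl(K(x) - K(v)\bigr) + K\mathcal{R}_V(x) = 0.$$
Applying $g_1$ kills the last two summands (via $g_1 K = 0$ and $g_1 q_1 = r_1 g_1$), so $\op{MC}(g)(x) = y$. Applying $K$ to what remains and using $K q_1 K = -K$ (a direct consequence of the contraction together with $K^2 = g_1 K = 0$) gives $K(x) = K(v)$, whence $K \mathcal{R}_V(x) = 0$. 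To upgrade this to $\mathcal{R}_V(x) = 0$ I would combine the standard $L_\infty$-morphism identity $\mathcal{R}_W(\op{MC}(g)(x)) = \sum_{i \geq 0}\frac{1}{i!} g_{i+1}(\mathcal{R}_V(x) \odot x^{\odot i})$ with $\mathcal{R}_W(y) = 0$, and run a filtration-degree induction: the leading nonvanishing piece of $\mathcal{R}_V(x)$ modulo deeper filtration must lie in $\ker g_1 \cap \ker K$, which by the contraction decomposition $\id = f_1 g_1 - K q_1 - q_1 K$ forces it to be a $q_1$-boundary pushable into ever deeper filtration. This last extraction is the main technical obstacle. Finally, for the case $K(v) = 0$: note that $\op{MC}(f)(y)$ lies in $\ker K \cap \op{MC}(V)$ because each $f_i$ for $i \geq 2$ starts with $K$ and $K f_1 = 0$, so bijectivity of $\rho$ identifies $\rho^{-1}(-,0) = \op{MC}(f)$; and on $\ker K \cap \op{MC}(V)$ the operator $K^\Sigma_i(x^{\odot i})$ vanishes for $i \geq 2$ (every term carries a $K(x) = 0$ factor), hence $g_i(x^{\odot i}) = 0$ for $i \geq 2$ and $\op{MC}(g)(x) = g_1(x)$, giving the stated inverse.
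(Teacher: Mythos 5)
The paper does not actually prove this theorem --- it states it and defers to Bandiera \cite{barticle} and Getzler \cite{getzler} --- so there is no in-paper proof to compare against; your proposal follows the standard route of Bandiera's argument (fixed-point equation, filtration-contraction convergence, then verification via the identity $f_1(y-\op{MC}(g)(x))+q_1(K(x)-K(v))+K\mathcal{R}_V(x)=0$ and the side conditions). Most of it is correct: the derivation of the fixed-point equation, the estimate $x_{n+1}-x_n\in F^{n+1}V$, uniqueness of the fixed point, the extraction of $\op{MC}(g)(x)=y$ by applying $g_1$ and of $K(x)=K(v)$ by applying $K$ (using $Kq_1K=-K$), and the treatment of $\rho^{-1}(-,0)$ via $f_i\in\op{im}K$ and the vanishing of $K^\Sigma_i(x^{\odot i})$ on $\ker K$ are all sound.

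The genuine gap is exactly where you flag it: upgrading $K\mathcal{R}_V(x)=0$ to $\mathcal{R}_V(x)=0$. Your proposed mechanism does not close. Writing $R=\mathcal{R}_V(x)$ and applying $\operatorname{id}=f_1g_1-Kq_1-q_1K$ with $K(R)=0$ gives $R=f_1g_1(R)-Kq_1(R)$; the morphism identity for $g$ handles the first summand, but the leading piece of $R$ is then $-Kq_1(R)$, which lies in $\op{im}(K)$ --- and $\op{im}(K)\subseteq\ker K\cap\ker g_1$ by the side conditions, so knowing the leading piece sits in $\ker g_1\cap\ker K$ yields no contradiction and it is not a ``$q_1$-boundary'' in any useful sense. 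What is missing is the Bianchi-type identity coming from $Q^2=0$, namely $q_1\mathcal{R}_V(x)=-\sum_{i\geq 1}\frac{1}{i!}\,q_{i+1}\bigl(\mathcal{R}_V(x)\odot x^{\odot i}\bigr)$. This shows that if $R\in F^nV$ then $q_1(R)\in F^{n+1}V$ (each term carries $R$ together with at least one extra $x\in F^1V$ under a $q_{i+1}$ with $i+1\geq 2$), hence $Kq_1(R)\in F^{n+1}V$; combined with $g_1(R)\in F^{n+1}V$ from the morphism identity, the induction $R\in\bigcap_n F^nV=0$ closes. Adding this one identity completes your argument.
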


\section{Deligne-Getzler $\infty$-Groupoid}
For a nilpotent Lie algebra $\g$ the exponential map gives a bijection between $\g$ and 
its unipotent group $G$ (and the Baker-Campbell-Hausdorff gives the group product in terms of brackets on $\g$).   In the 
$L_\infty$ case, $L^0$ is not a Lie algebra as the bracket does not satisfy the Jacobi identity.  We need an object that generalizes the Lie group $G$. 
 It turns out that the natural object to consider will be $\infty$-groupoids, which are simplicial 
 sets satisfying some additional conditions.  
In  \cite{getzler}, Getzler  explains how to integrate a nilpotent $L_\infty$ algebra to an $\infty$-groupoid, which generalizes the way a nilpotent Lie algebra integrates to its exponential group.  General Baker-Campbell-Hausdorff product can then be seen as an arrow filing a horn of a Kan complex.  A first model for such $\infty$-groupoid, $\op{MC}_\infty(L):= \op{MC}(\Omega^* (\Delta_\bullet ; L))$, was introduced by Sullivan and studied in depth by Hinich \cite{hinich}.  The problem with $\op{MC}_\infty(L)$ is that 
it is quite large, e.g. larger than the Cech complex in the geometric setting (and it is not a $\infty$-groupoid in a strict sense \cite{bthesis}).  When $\g$ is a nilpotent Lie algebra, the nerve $\mathcal{N}(e^{\g})$ is only a deformation retraction of $\op{MC}_\infty(\g)$.  Getzler introduced a smaller model $\gamma_\bullet$ homotopy equivalent to $\op{MC}_\infty(L)$ as a Kan complex.  Bandiera rewrites $\gamma_\bullet$ as $\Del_\infty(L):= \op{MC}(C^*(\Delta_\bullet ; L))$ using the formal Kuranishi theorem \cite{barticle}, which is the notation we are going to use.

\subsection{Groupoids and $\infty$-groupoids}

We recall the standard simplicial definitons. 
Let \(\Delta\) be the simplex category with ordinals \([n] = (0 < 1 < \dots < n)\) as objects and non decreasing maps as morphisms.  It is generated by the injective face maps \(d_k:[n-1] \to [n]\), \(0\leq k \leq n\),(the image of $d_k$
does not contain $k$); 
and the surjective degeneracy maps \(s_k:[n] \to [n-1]\), \(0 \leq k \leq n-1\)
(the value $k$ is repeated twice for $s_k$).  

\begin{definition}
	A simplicial set \(X\) is a contravariant functor from \(\Delta\) to the category of sets.  This gives us a sequence of sets \(X_n = X([n])\) indexed by the natural numbers \(n\in \{0, 1, 2, \dots\}\), and the maps 
	\begin{align*}
		\delta_k &= X(d_k) : X_n \to X_{n-1}, & 0 \leq k \leq n\\
		\sigma_k &= X(s_k) : X_{n-1} \to X_n, & 0 \leq k \leq n
	\end{align*}
	satisfying the simplicial identities, cf. \cite{May}. 
\end{definition}

In other words, $X$ is a functor $\Delta^{op} \to Sets$. Similarly, we can 
define simplicial objects in any category.

\begin{definition}
	Let $\Delta_n = \Delta(-,[n]) \in \op{SSet}$ be the standard simplicial $n$-simplex in $\op{SSet}$.  For $0 \leq i \leq n$, let $\Lambda^i_n \subset \Delta_n$ be simplicial set 
	corresponding to the union of the faces $d_k[\Delta_{n-1}] \subset \Delta_n$, $k \neq i$.  An \textbf{n-horn} in $X$ is a simplicial map from $\Lambda^i_n$ to $X$.
	A simplicial object $X$ satisfies the \textbf{Kan condition} if any morphism of n-horn can be extended to a simplicial morphism $\Delta_n \to X$.  Such $X$ is called a \textbf{Kan complex}.
\end{definition}

We will now introduce the nerve functor, which associate each groupoid (group) a corresponding simplicial set.

\begin{definition}
	Given a groupoid $G$ (a category where all morphisms are invertible), the \textbf{nerve} $\mathcal{N}(G)$ of G is a simplicial set whose 0-simplices are objects of $G$, 1-simplices morphisms of $G$, and $n$-simplices are $n$-tuples of composable morphisms of $G$, i.e.
	\[x_0 \overset{f_1}{\to} \dots \overset{f_n}{\to} x_n\]
	where $x_i$ is an object in $G$ and the $f_i: x_{i-1} \to x_i$ is a morphism from $x_{i-1}$ to $x_i$.
	The face maps $d_i: \mathcal{N}(G)_k \to \mathcal{N}(G)_{k-1}$
	are given by composition of morphisms at the $i$-th object. 
	The degeneracy maps $s_i: \mathcal{N}(G)_k \to \mathcal{N}(G)_{k+1}$
	are given by inserting identity morphism at the object $x_i$.
\end{definition}

\begin{proposition} \cite{bthesis} 
	Given a groupoid $G$, the nerve $\mathcal{N}(G)$ is a Kan complex.
\end{proposition}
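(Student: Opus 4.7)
The plan is to unpack the data of a horn $\Lambda^i_n \to \mathcal{N}(G)$ in terms of arrows of $G$ and to exhibit a filling $n$-simplex. The argument splits into the cases $n = 1$, $n = 2$, and $n \geq 3$, with the groupoid (invertibility) hypothesis entering only for the dimension-$2$ outer horns.

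Recall that the face $\delta_k$ of a chain $x_0 \xrightarrow{f_1} \cdots \xrightarrow{f_n} x_n$ drops the vertex $x_k$; for $0 < k < n$ it replaces the consecutive pair $(f_k, f_{k+1})$ by the composition $f_{k+1} \circ f_k$, while $\delta_0$ and $\delta_n$ simply truncate at the ends. Consequently the arrow $f_j$ is visible intact in every face $\delta_k$ with $k \notin \{j-1, j\}$. For $n = 1$ a horn is a single vertex $x$, filled by $\id_x$. For $n \geq 3$ every $f_j$ is visible directly in some face of the horn (the excluded indices $\{i, j-1, j\}$ still leave at least $n+1-3 \geq 1$ admissible $k$), and consecutive pairs $(f_j, f_{j+1})$ also co-occur in a common face by the same counting. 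The horn therefore determines a composable chain $x_0 \xrightarrow{f_1} \cdots \xrightarrow{f_n} x_n$, and the simplicial identities among the horn faces force the resulting $n$-simplex of $\mathcal{N}(G)$ to restrict correctly on every face $k \neq i$; in particular, at this range of dimensions invertibility plays no role.

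For $n = 2$ the inner horn $\Lambda^1_2$ is filled by composing the two given arrows. The outer horns are where invertibility of arrows in $G$ is essential: $\Lambda^0_2$ supplies $\delta_2 = (x_0 \xrightarrow{f_1} x_1)$ and $\delta_1 = (x_0 \xrightarrow{g} x_2)$, and the 2-simplex $x_0 \xrightarrow{f_1} x_1 \xrightarrow{g \circ f_1^{-1}} x_2$ fills it, since its new $\delta_1$-face is $(g \circ f_1^{-1}) \circ f_1 = g$; the horn $\Lambda^2_2$ is handled symmetrically via $f_2^{-1} \circ g$. The main obstacle is the combinatorial bookkeeping of identifying which arrows and which compositions are supplied by a given horn and verifying compatibility with the simplicial identities; the genuine algebraic content is the single observation that, in a groupoid rather than a mere category, the missing arrow of an outer 2-horn can always be recovered by inversion.
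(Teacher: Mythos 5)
The paper offers no proof of this proposition at all---it is stated with only a citation to \cite{bthesis}---so your argument has to be judged on its own. The overall plan is the standard one and most of it is fine: the $n=1$ and $n=2$ cases are correct (including the observation that invertibility is what fills the outer $2$-horns), and for $n\geq 4$ every $2$-face of $\Delta_n$ already lies in some face $\delta_k$ with $k\neq i$, so $2$-coskeletality of the nerve gives a unique filler with no use of inverses, as you say.

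The gap is at $n=3$ for the \emph{outer} horns, where your parenthetical claim that ``at this range of dimensions invertibility plays no role'' is false, and the step you wave through (``the simplicial identities among the horn faces force the resulting $n$-simplex to restrict correctly on every face'') is precisely where inversion is needed. Concretely, for $\Lambda^0_3$ write the given faces on the vertex sets $\{0,1,2\}$, $\{0,2,3\}$, $\{0,1,3\}$ as $\sigma_3=(f_1,f_2)$, $\sigma_1=(h_1,h_2)$, $\sigma_2=(g_1,g_2)$. The faces $\sigma_3$ and $\sigma_1$ force the candidate filler to be the chain $(f_1,f_2,h_2)$, whose $\delta_2$-face is $(f_1,\,h_2\circ f_2)$; but the horn compatibilities only yield $g_1=f_1$, $h_1=f_2\circ f_1$ and $g_2\circ f_1=h_2\circ h_1=h_2\circ f_2\circ f_1$. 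To conclude the required identity $g_2=h_2\circ f_2$ you must cancel $f_1$, i.e.\ invert it; for a category with a non-invertible $f_1$ one can choose $g_2\neq h_2\circ f_2$ satisfying $g_2\circ f_1=h_2\circ f_2\circ f_1$ and obtain an unfillable outer $3$-horn, so the use of invertibility here is unavoidable. (Relatedly, your counting claim that consecutive pairs $(f_j,f_{j+1})$ co-occur intact in a common face gives $n+1-4=0$ admissible indices at $n=3$, and indeed fails for the pair $(f_2,f_3)$ in $\Lambda^0_3$.) The repair is easy---run the same cancellation with inverses for $\Lambda^0_3$ and symmetrically for $\Lambda^3_3$---but as written the $n=3$ outer case is not proved.
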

Thus Kan complexes give us a generalization for groupoids (groups). We will 
further narrow down to simplicial sets called $\infty$-groupoids, which 
Kan complexes with an additional class of \textit{thin elements} such that every 
horn has a unique thin filler, see Definition 2.5 in \cite{getzler}. 

\begin{definition}
	Two parallel 1-simplices $f$ and $g$ of a Kan complex $X$ are \textbf{homotopic} if and only if there exist a 2-simplex in $X$ of either of the following form
	\[ \xymatrix{ & & X_1\ar@{=}[rrdd] & & & & & X_0\ar[rrdd]^f & &\\ & & & & & & & & & & & &\\ X_0\ar[uurr]^f \ar[rrrr]_g & & & & X_1 &  X_0\ar@{=}[uurr]\ar[rrrr]_g & & & &X_1}\]
	This defines an equivalence relation on the 1-simplices of $X$ \cite{infinite}.
\end{definition}

The left adjoint to the nerve functor, $\mathcal{N}: \op{Grpd} \to \op{Kan}$, which takes Kan complexes back to groupoids is called the fundamental groupoid functor.

\begin{definition}
	Given a Kan complex $X$, the \textbf{fundamental groupoid}, $\pi_{\leq 1} X$, is the groupoid with the following properties:
	\begin{itemize}
		\item the set of objects are 0-simplices in $X$
		\item the morphisms are homotopy classes of 1-simplices in $X$
		\item the identity morphism of $x \in X_0$ is represented by the degenerate 1-simplex $s_0(x)$
		\item a composition relation $h = g \circ f$ in $\pi_{\leq 1} X$ if and only if for any choices of 1-simplices representing these morphisms, there exist a 2-simplex in $X$ with boundary
		\[
		\xymatrix{ & & x_1\ar[rrdd]^g & &\\ & & & & &\\ x_0 \ar[uurr]^f \ar[rrrr]_h & & & & x_2}
		\]
	\end{itemize}
\end{definition}

The fundamental groupoid of a Kan complex mimics the fundamental groupoid of a topological space.  The following proposition tells us that the $\pi_{\leq 1}$ functor preserves the homotopy relation.

\begin{proposition}
	\label{isom-groupoids} 
	If $X$ and $Y$ are homotopy equivalent Kan complexes, then $\pi_{\leq 1} X$ and $\pi_{\leq 1} Y$ are equivalent as groupoids. If $X$ and $Y$ are homotopy equivalent Kan complexes and that $\pi_{\leq 1}X$ and $\pi_{\leq 1}Y$ have the same set of objects, then $\pi_{\leq 1} X$ and $\pi_{\leq 1} Y$ are isomorphic groupoids.
\end{proposition}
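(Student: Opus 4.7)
The plan is to show that the fundamental groupoid construction $\pi_{\leq 1}$ sends homotopy equivalences of Kan complexes to equivalences of groupoids, and then sharpen this to an isomorphism under the extra hypothesis. First I would verify that $\pi_{\leq 1}$ is functorial: a simplicial map $f: X \to Y$ sends 0-simplices to 0-simplices and 1-simplices to 1-simplices, and preserves the composition witnesses (2-simplices), so it descends to a well-defined functor $\pi_{\leq 1}(f): \pi_{\leq 1}X \to \pi_{\leq 1}Y$ on homotopy classes of 1-simplices.

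Next I would show that if $f, g: X \to Y$ are simplicially homotopic via $H: \Delta_1 \times X \to Y$, then $\pi_{\leq 1}(f)$ and $\pi_{\leq 1}(g)$ are naturally isomorphic. For each 0-simplex $x$, the restriction of $H$ to $\Delta_1 \times \{x\}$ yields a 1-simplex in $Y$ from $f(x)$ to $g(x)$, whose homotopy class is an isomorphism in $\pi_{\leq 1}Y$; naturality on a 1-simplex $\alpha$ of $X$ follows by reading off the two triangulations of the prism $\Delta_1 \times \Delta_1$ as 2-simplices in $Y$, which give the commutativity of the naturality square in $\pi_{\leq 1}Y$. Applying this to a homotopy equivalence $f: X \to Y$ with quasi-inverse $g: Y \to X$ yields natural isomorphisms $\pi_{\leq 1}(f)\pi_{\leq 1}(g) \cong \id_{\pi_{\leq 1}Y}$ and $\pi_{\leq 1}(g)\pi_{\leq 1}(f) \cong \id_{\pi_{\leq 1}X}$, hence an equivalence of groupoids.

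For the second statement, note that under the identification of object sets $X_0 = Y_0$ we may arrange the homotopy equivalence $f$ so that $f_0$ is the identity on 0-simplices (or at least a bijection, which is what matters for the groupoid structure). Then $\pi_{\leq 1}(f)$ is bijective on objects, and fully faithful because any equivalence of groupoids is fully faithful. A functor between groupoids that is bijective on objects and fully faithful is an isomorphism, which gives the conclusion.

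The main obstacle will be the natural-isomorphism step: one must carefully extract the comparison 1-simplices from a simplicial homotopy and use the Kan condition to verify that the resulting assignment is natural (and to reverse directions if the simplicial convention for $H$ produces a 1-simplex of the opposite orientation). Everything else reduces to functorial bookkeeping once that naturality is in hand, and the passage from equivalence to isomorphism in the second part is then a purely categorical observation about groupoids.
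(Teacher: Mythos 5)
Your treatment of the first claim is a correct, self-contained version of what the paper simply outsources to a citation: functoriality of $\pi_{\leq 1}$, plus the observation that a simplicial homotopy $H:\Delta_1\times X\to Y$ induces a natural isomorphism between the two induced functors (component at $x$ given by the class of the $1$-simplex $H|_{\Delta_1\times\{x\}}$, naturality read off the two triangles of the prism $\Delta_1\times\Delta_1$). That route is fine and arguably preferable to a bare reference.

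The second claim is where there is a genuine gap, and it is worth flagging because the paper's own one-line justification has the same defect. You write that ``we may arrange the homotopy equivalence $f$ so that $f_0$ is the identity on 0-simplices (or at least a bijection).'' The hypothesis only says that the \emph{sets} $X_0$ and $Y_0$ coincide; it gives you no way to replace $f$ by a map that is bijective on vertices. In fact the proposition as literally stated is false: let $G$ be the groupoid with object set $\mathbb{N}$ whose isomorphism classes are $\{0\}$ and $\{1,2,\dots\}$, all automorphism groups trivial, and let $H$ have the same object set with classes $\{0,1\}$ and $\{2,3,\dots\}$. Their nerves are homotopy equivalent Kan complexes with identical $0$-simplices, $\pi_{\leq 1}$ recovers $G$ and $H$, and these are equivalent but not isomorphic, since no bijection of objects can carry a one-element isomorphism class to a two-element one. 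The paper's proof rests on the assertion that ``equivalent groupoids with the same set of objects are isomorphic,'' which fails for exactly this reason. Both arguments are rescued in the intended application (comparing $\pi_{\leq 1}\MC_\infty(L)$ with $\pi_{\leq 1}\Delf(L)$) because there the comparison map is already the identity on $0$-simplices; under that strengthened hypothesis your closing step --- bijective on objects plus fully faithful implies isomorphism --- applies verbatim and your proof is complete. So the correct fix is to assume the homotopy equivalence restricts to a bijection on $0$-simplices, not merely that the object sets agree.
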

\begin{proof}
	See \cite{infinite} for the proof of the first part. For the second, observe that equivalent groupoids with the same set of objects are isomorphic.
\end{proof}

\subsection{Deligne-Getzler $\infty$-groupoids}
In this subsection we  first introduce two important complexes and from them construct the Deligne-Getzler $\infty$-groupoid that gives us the general Baker-Campbell-Hausdorff product of a DGLA or an $L_\infty$ algebra.

\begin{definition}
	For  $n\geq 0$, the \textbf{differential graded commutative algebra of polynomial differential forms on the} $\mathbf{n}$\textbf{-simplex} $\mathbf{\Delta_n}$ is:
	\[\Omega^*_n = \frac{k[t_0, \dots, t_n, dt_0, \dots, dt_n]}{(\sum t_i - 1, \sum dt_i)}.\]
	where the differential is induced by the usual differential for differential forms that sends $t_i \to dt_i$.  Notice that $\Omega^\ast_\bullet$ has a natural structure of simplicial dg commutative algebra.  The face map $\partial_i$
	annihilates $t_i$ and $dt_i$ 
	and the degeneracy map $s_i$ sends $t_i$, $dt_i$ to $t_i + t_{i+1}$ and
	$dt_i + dt_{i+1}$, respectively. 
	
	Given a simplicial set $X$, the \textbf{space of polynomial $\mathbf{l}$-forms} on $X$ is $\Omega^l(X):= \op{SSet}(X,\Omega^l)$, i.e. the simplicial set morphisms from $X$ to $\Omega^l$, and $\Omega^*(X):= \oplus_{l\geq 0}\Omega^l (X)$.  In particular, when $X$ is $\Delta_\bullet$, we have $\Omega^*(\Delta_\bullet) = \Omega^*$.
	
	Given a dg vector space $L$, set $\Omega^*(X;L) = \Omega^*(X) \otimes L$ \textbf{the complex of polynomial differential forms on} $\mathbf{X}$ \textbf{with coefficients in} $\mathbf{L}$
\end{definition}

\begin{definition}
	The \textbf{complex of non-degenerate simplicial} $\mathbf{k}$\textbf{-cochains on }$\mathbf{X}$ is $C^*(X):=C^*(X; k)= \oplus_{l\geq 0} C^l(X)$ where $C^l(X)$ is the space of $k$-valued $l$-cochains $\alpha: X_l \to k; \sigma \mapsto \alpha_\sigma$ on $X$ vanishing on degenerate simplices.  The differential is given by
	\[
	d\alpha(\sigma)
	=(d\alpha)_{\sigma}
	=\displaystyle \sum_{i=0}^{k+1} (-1)^i \alpha_{\partial_i \sigma}
	\]
	where $\partial_i:X_{k+1} \to X_k, i = 0, \dots, k+1,$ are the face maps of $X$.
	
	Given a dg vecttor space $L$, set $C^*(X;L) = C^*(X) \otimes L$ \textbf{the complex of non-degenerate simplicial cochains on} $\mathbf{X}$ \textbf{with coefficients in} $\mathbf{L}$.
\end{definition}

Note that $\Omega^*(X;L)$ inherits an algebraic structure of  $L$ (such as Lie or graded commutative)
 as $\Omega^*(X)$ is graded commutative.  The complex $\Omega^*(X;L)$ does not inherit a complete structure in general, but we can replace $\Omega^*(X;L)$ with its completion $\widehat{\Omega}^*(X;L) := \underleftarrow{\operatorname{lim}} \Omega^*(X; L/F^p L)$ which will also have the same algebraic structure as $L$.  

On the other hand, for a complete  $L$ the complex $C^*(X; L)$ is complete with respect to the filtration $F^p C^*(X;L) = C^*(X; F^p L)$, but in general $C^*(X; L)$ does not inherit an algebraic
 structure of $L$.  But a standard contraction from $\Omega^*(X;L)$ to $C^*(X;L)$ (and thus from $\hat{\Omega}^*(X;L)$ to $C^*(X;L)$) can be used to transfer a \textit{homotopy} version of 
structure (Lie, graded commutative, etc.)  to  $C^*(X;L)$ using the structure on $\hat{\Omega}^*(X;L)$.

\begin{theorem}[Getzler, \cite{getzler}]
	There is a standard contraction from $\hat{\Omega}^*(X;L)$ to $C^*(X;L)$ given by integrating forms over simplices in one direction, inclusion of Whitney's elementary forms in the other direction, and Dupont homotopy as the contracting homotopy.
\end{theorem}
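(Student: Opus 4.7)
The strategy is to build the contraction first on the universal cosimplicial model $(\Omega^\ast(\Delta_\bullet), C^\ast(\Delta_\bullet))$, then transfer to a general simplicial set $X$ by naturality, and finally tensor with $L$ and pass to the completion.

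First, on each standard simplex $\Delta_n$ I would define three explicit maps. The integration map $p_n : \Omega^\ast(\Delta_n) \to C^\ast(\Delta_n)$ sends a polynomial $k$-form $\omega$ to the cochain whose value on a non-degenerate $k$-face $\sigma : [k] \hookrightarrow [n]$ is $\int_{\Delta_k} \sigma^\ast \omega$. The Whitney inclusion $w_n : C^\ast(\Delta_n) \to \Omega^\ast(\Delta_n)$ sends the cochain dual to the face $[i_0 < \cdots < i_k]$ to the elementary form
$$
k!\,\sum_{j=0}^{k} (-1)^j\, t_{i_j}\, dt_{i_0} \wedge \cdots \wedge \widehat{dt_{i_j}} \wedge \cdots \wedge dt_{i_k}.
$$
The Dupont contracting homotopy $h_n : \Omega^\ast(\Delta_n) \to \Omega^{\ast-1}(\Delta_n)$ is built inductively from fiber-integrations along the standard radial retractions of $\Delta_n$ onto its vertices. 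A direct check shows that $p_n$, $w_n$, $h_n$ are natural with respect to face and degeneracy operators in $[n]$, so for any simplicial set $X$ they induce corresponding maps $p_X$, $w_X$, $h_X$ on $\Omega^\ast(X)$ and $C^\ast(X)$.

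Next, I would verify the four axioms of a contraction for the triple $(\Omega^\ast(X), C^\ast(X); w_X, p_X, h_X)$. The identity $p_X w_X = \mathrm{id}$ is a bare-hands computation: the Whitney form dual to a non-degenerate $l$-face integrates to $1$ over that face and to $0$ over every other non-degenerate $l$-face. The chain homotopy identity $h_X d + d h_X = w_X p_X - \mathrm{id}$ is Dupont's theorem, which I would prove by induction on $n$ using Stokes on $\Delta_n$ together with the fact that the radial retractions restrict compatibly to proper faces. The side conditions $h_X w_X = 0$, $h_X^2 = 0$, and $p_X h_X = 0$ follow from the Dupont construction, since the homotopy operator vanishes on its own image and on the image of the Whitney inclusion.

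Finally, I would tensor the contraction with $L$ to obtain the required maps on $\Omega^\ast(X; L) = \Omega^\ast(X) \otimes L$ and $C^\ast(X; L) = C^\ast(X) \otimes L$; all identities persist after tensoring because $w$, $p$, $h$ are $k$-linear. Passing to the completion is then routine: the filtration $F^p L$ induces $F^p \widehat{\Omega}^\ast(X; L) := \widehat{\Omega}^\ast(X; F^p L)$ and $F^p C^\ast(X; L) := C^\ast(X; F^p L)$, and since $w$, $p$, $h$ come from the simplicial geometry alone and act as the identity on coefficients, they preserve filtration level and extend continuously to the inverse limits. Completeness of $C^\ast(X;L)$ follows because $C^\ast(X; -)$ commutes with inverse limits, and the induced filtration $(w_X \otimes \mathrm{id})^{-1} F^p \widehat{\Omega}^\ast(X; L)$ coincides with $C^\ast(X; F^p L)$ because the Whitney inclusion is injective on coefficients. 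The main technical point is Dupont's identity together with the side conditions; the rest of the argument is bookkeeping about naturality and filtration preservation.
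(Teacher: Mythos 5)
Your proposal is correct and reconstructs exactly the standard argument: the paper itself gives no proof of this statement, citing it directly from Getzler (who in turn relies on Dupont's construction of the elementary forms, the integration map, and the homotopy built from fiber integration along the vertex retractions, with the side conditions verified in Getzler's Lemma 2.7). The only place where your sketch understates the work is the verification of the side conditions $h_X w_X = h_X^2 = p_X h_X = 0$, which do not follow formally from ``the homotopy vanishes on its own image'' but require the explicit form of the Dupont operator; everything else (naturality over $\Delta$, tensoring with $L$, passing to the completion) is routine as you say.
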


In particular when $L$ is a complete $L_\infty$ algebra, so is $\widehat{\Omega}^*(X;L)$, and by 
homotopy transfer $C^*(X;L)$  is a complete $L_\infty$ algebra.
We now define the Deligne-Getzler $\infty$-groupoid of a complete DGLA ($L_\infty$ algebra) $L$.  Denote
\[
\xymatrix{ \Delta_\bullet : {\Delta_0}
	\ar@<2pt>[r]\ar@<-2pt>[r] & { \Delta_1}
	\ar@<4pt>[r] \ar[r] \ar@<-4pt>[r] & { \Delta_2}
	\ar@<6pt>[r] \ar@<2pt>[r] \ar@<-2pt>[r] \ar@<-6pt>[r]&
	\cdots}
\]
the standard cosimplicial simplex in $\op{SSet}^{\Delta}$.  

\begin{definition}
	Given a complete $L_\infty$ algebra $L$, the \textbf{Deligne-Getzler $\mathbf{\infty}$-groupoid} of $L$ is the simplicial set $\Del_\infty(L)_n:=\op{MC}(C^*(\Delta_n;L))$ of Maurer-Cartan cochains
	with coefficients in $L$, its face an degeneracy maps induced from $\Delta_\bullet$ .  Further 
	$\MC_\infty(L)$ is the simplicial set with $\MC_{\infty} (L)_n  := \MC(\Omega^*(\Delta_n; L))$
	 with face and degeneracy maps induced  from $\Omega_\bullet$.
\end{definition}

The properties of these two simplicial sets are summarized below 

\begin{proposition}  \label{groupoid-properties} 
	The following hold: 

	(a) $\MC_\infty(L)$ and $Del_\infty(L)$ are homotopy equivalent $\infty$-groupoids. 
	
	(b) $\pi_{\leq 1}\MC_\infty(L)$ is isomorphic to $\pi_{\leq 1}\Delf(L)$ as groupoids.
	
	(c) For a non negatively graded nilpotent $L_\infty$ algebra $L$,  $\Delf(L)$ is isomorphic to 
	the nerve of  $Del (L) := \pi_{\leq 1}\Delf(L)^{op} \simeq \pi_{\leq 1}\MC_\infty(L)
	^{op}$. Further, in this case morphisms in $\pi_{\leq 1}\MC_\infty(L)$, are in bijection with 1-simplices of $\Delf(L)$.
\end{proposition}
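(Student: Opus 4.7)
The plan is to address the three parts in sequence, using the standard contraction from $\widehat{\Omega}^*(\Delta_\bullet; L)$ to $C^*(\Delta_\bullet; L)$ as the unifying tool, and exploiting the non-negative grading in (c) to restrict Maurer--Cartan data to the $2$-skeleton. For (a), I would apply Getzler's contraction levelwise and invoke the homotopy transfer theorem from Section 2 to obtain, for each $n$, continuous $L_\infty$ quasi-isomorphisms $f_n: C^*(\Delta_n; L) \to \widehat{\Omega}^*(\Delta_n; L)$ and $g_n$ in the opposite direction. Naturality of the contraction in $\Delta_\bullet$ forces $\{f_n, g_n\}$ to respect face and degeneracy maps, so applying $\MC(-)$ levelwise produces simplicial maps between $\MC_\infty(L)$ and $\Delf(L)$. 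The formal Kuranishi theorem from Section 2 gives $\MC(g_n) \circ \MC(f_n) = \id$, while $\MC(f_n) \circ \MC(g_n)$ is joined to $\id$ by a canonical path built from the cylinder $L \otimes_k k[s, ds]$; since this path is itself simplicial, the two maps assemble into a simplicial homotopy equivalence, and the $\infty$-groupoid structure on $\MC_\infty(L)$ (Hinich and Getzler) transports to $\Delf(L)$.

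Part (b) is an immediate corollary, since $\Delta_0$ is a point and therefore $\widehat{\Omega}^*(\Delta_0; L) = L = C^*(\Delta_0; L)$ as $L_\infty$ algebras, giving both simplicial sets the same set of $0$-simplices, namely $\MC(L)$; Proposition \ref{isom-groupoids} then applies. For (c), I begin with a degree count: a total-degree-one element of $C^*(\Delta_n; L)$ lies in $\bigoplus_{k + m = 1} C^k(\Delta_n) \otimes L^m$, and the hypothesis $L^{<0} = 0$ restricts this to the two summands $C^0(\Delta_n) \otimes L^1$ and $C^1(\Delta_n) \otimes L^0$. Hence a Maurer--Cartan element of $\Delf(L)_n$ is specified exactly by a vertex label $a_i \in L^1$ and an edge label $g_{ij} \in L^0$ for each ordered pair $i < j$. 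Decomposing the MC equation (of total degree $2$) into its $(C^k \otimes L^{2-k})$ components, I would show: the $(0,2)$ piece forces each $a_i \in \MC(L)$; the $(1,1)$ piece says $g_{ij}$ is a generalized gauge transformation from $a_i$ to $a_j$; and the $(2,0)$ piece imposes a CBH-type composition law on each $2$-face, $g_{ik} = g_{ij} \ast g_{jk}$, built from the transferred higher brackets. For $n \geq 3$ the MC equation on higher faces is subsumed by the $2$-face constraints, so $\Delf(L)$ is $2$-coskeletal and canonically isomorphic to the nerve of the category with MC elements as objects and labeled edges as morphisms. That category is a groupoid (the reverse-oriented edge provides an inverse) and by construction equals $\pi_{\leq 1}\Delf(L)^{op}$; the final bijection in (c) follows from (b) together with the tautology that in the nerve of a groupoid each morphism is represented by a unique $1$-simplex.

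The principal obstacle is the explicit verification of the $(2,0)$ component in (c): one must trace through the recursive formulas for the transferred brackets $r_i$, together with Getzler's Dupont-homotopy formulas for the contraction, and check that the resulting equation on the face $\{i, j, k\}$ of $\Delta_n$ is precisely the Baker--Campbell--Hausdorff product in $\exp(L^0)$. Nilpotency of $L$ ensures the sums involved are finite, while the non-negative grading ensures that contributions from higher-arity brackets $q_i$ with $i \geq 3$ reassemble into the corrections making up the full CBH series. This is Getzler's original analysis relating horn-filling in $\gamma_\bullet$ to CBH, now adapted from the DGLA setting to the transferred $L_\infty$ structure; the associativity check on $3$-simplices that establishes $2$-coskeletality then follows by similar degree bookkeeping.
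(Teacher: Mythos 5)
The paper's own proof of this proposition is essentially a set of citations: the Kan property of $\MC_\infty(L)$ is from Hinich, the Kan property of $\Delf(L)$ and the homotopy equivalence are Corollary 5.9 of Getzler (for the isomorphic model $\gamma_\bullet(L)$), part (b) is Proposition \ref{isom-groupoids}, and the nerve identification in (c) is Getzler's Theorem 5.4. You instead attempt to reconstruct these results from the homotopy transfer and formal Kuranishi theorems. Your reconstructions of (b) and (c) follow Getzler's actual arguments and are sound in outline: the identification $C^*(\Delta_0;L)=L=\Omega^*(\Delta_0;L)$ plus Proposition \ref{isom-groupoids} handles (b), and the degree decomposition of $\MC(C^*(\Delta_n;L))$ into vertex labels in $L^1$ and edge labels in $L^0$, with the $(2,0)$-component giving the CBH composition, is exactly how the nerve description arises; you correctly flag the CBH verification as the technical core.

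There is, however, a genuine gap in your part (a): you propose to establish the $\infty$-groupoid (Kan) structure on $\Delf(L)$ by ``transporting'' it along the simplicial homotopy equivalence with $\MC_\infty(L)$. The Kan condition is not invariant under simplicial homotopy equivalence ($\Delta_1$ is contractible but not Kan), and the unique-thin-filler structure of Definition 2.5 of Getzler certainly does not transport. The Kan property of $\Delf(L)$ must be proved directly; the mechanism is the one the paper records in Section 3.3, namely the vertex contractions $h^i$ on $C^*(\Delta_n;L)$ together with the formal Kuranishi theorem, which produce the horn fillers (and their uniqueness on the thin part). Two smaller points: your claim that the levelwise transferred morphisms $f_n,g_n$ and the cylinder path joining $\MC(f_n)\MC(g_n)$ to the identity assemble into simplicial maps rests on the naturality of the Dupont contraction in $\Delta_\bullet$, which should be stated rather than assumed; and in (c) the inverse of a morphism is obtained by filling an outer horn (yielding the CBH inverse in $L^0$), not from a ``reverse-oriented edge,'' since $\Delta_n$ has no such simplex.
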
 

\begin{proof}
	For part (a), the Kan property for  $\MC_\infty$ is proved in \cite{hinich}, while the Kan property
	for  $\Delf$ and homotopy equivalence
	 in  Corollary 5.9 of \cite{getzler} (the latter paper actually deals with an isomorphic
	simplical set denoted there by $\gamma_\bullet(L)$). 
	morphism on $\pi_{\leq 1}$ in part (b) follows by Proposition \ref{isom-groupoids}. 
	In part (c), isomorphism with the nerve is a particular case of Theorem 5.4
	of  \cite{getzler}. 
	For statement about morphisms, note that under the assumption on grading,  morphisms in $\Del^{\op{op}}(L)$ are given by 1-simplices of $\Delf(L)$ as $\Delf(L) = \mathcal{N}(\Del^{\op{op}}(L))$.
\end{proof}

\subsection{Baker-Campbell-Hausdorff and Horn Filling}
In this section, we assume that an nilpotent $L_\infty$ algebra $L$ is concentrated
in non-negative degrees and, following \cite{bthesis},
 we relate horn filling in $\Del_\infty(L)$ with 
 Baker-Campbell-Hausdorff product on $L^0$.  We will start by looking at a complete contraction given by Bandiera in \cite{barticle}.
Let $L$ be a complete $L_\infty$ algebra.  For $i=0,\ldots,n$, we define a homotopy $h^i:C^*(\Delta_n;L)\to C^{*-1}(\Delta_{n};L)$ by writing for $0\leq i_0<\cdots<i_k\leq n$:
\begin{equation*}  h^i(\alpha)_{i_0\cdots i_k}=\left\{\begin{array}{ll} 0 &\mbox{if $i\in\{i_0,\cdots,i_k\}$}\\ (-1)^j\alpha_{i_0\cdots i_{j-1}ii_{j}\cdots i_{k}}&\mbox{if $i_{j-1}<i<i_j$}\end{array}\right.
\end{equation*}
where $\beta_{i_0\cdots i_k}\in L^{i-k}$, $0\leq i_0<\cdots< i_k\leq n$ is the evaluation of $\beta\in C^i(\Delta_n;L)$ on the $k$-simplex of $\Delta_n$ spanned by the vertices $i_0,\ldots,i_k$. Denote $e_i:\Delta_0\to\Delta_n$ the inclusion of the $i$-th vertex  and by $\pi:\Delta_n\to\Delta_0$ the final morphism. The above  $h^i$ gives a homotopy on the complete contraction
\[
\begin{tikzcd}
	L = C^*(\Delta_0;L) \arrow[r,shift left, "\pi^*"] \arrow[r, leftarrow, shift right, "e_i^*"'] &C^*(\Delta_n;L) \arrow[loop right, leftarrow]{l}{h^i}
\end{tikzcd}
\]
If $\de_i:\Delta_{n-1}\to\Delta_n$ is the inclusion of the $i$-th face, then $\partial_i^\ast$ sends $h^i(C^{1}(\Delta_n;L))$ isomorphically to $C^{0}(\Delta_{n-1};L)$.
The formal Kuranishi theorem by with $W = L$, $V = C^*(\Delta_{n};L)$ and $K = h^i$, gives the following proposition \cite{barticle}:

\begin{proposition}[\cite{bthesis}]
	For all $i=0,\ldots,n$, the correspondence
	\[\rho^i:\Del_\infty(L)_n\to \op{MC}(L)\times h^i(C^1(\Delta_n;L)):\alpha\to (e_i^\ast(\alpha),h^i(\alpha))\]
	is bijective.
\end{proposition}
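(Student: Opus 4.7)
The plan is to apply the formal Kuranishi theorem to the complete contraction established just above the proposition, with $W = L = C^\ast(\Delta_0; L)$, $V = C^\ast(\Delta_n; L)$, $f_1 = \pi^\ast$, $g_1 = e_i^\ast$, and $K = h^i$. First I would check that this data really forms a complete contraction. The relation $e_i^\ast \pi^\ast = \id_L$ is immediate since $\pi^\ast(x)$ is the constant cochain with value $x$ at every vertex. The side condition $e_i^\ast h^i = 0$ reads off directly from the defining formula for $h^i$: its output vanishes on every ordered tuple containing the index $i$, in particular on the singleton $(i)$ that $e_i^\ast$ evaluates against. The side conditions $h^i \pi^\ast = 0$ and $(h^i)^2 = 0$ are elementary combinatorial checks using the same index-insertion formula. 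Continuity with respect to the filtration $F^p C^\ast(\Delta_n; L) = C^\ast(\Delta_n; F^p L)$ is automatic because $\pi^\ast$, $e_i^\ast$ and $h^i$ all act trivially on the $L$-factor.

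The remaining axiom is the chain homotopy identity $h^i d + d h^i = \pi^\ast e_i^\ast - \id$. I would verify this by evaluating both sides on a cochain at an arbitrary oriented simplex $(i_0, \ldots, i_k)$ of $\Delta_n$, splitting into the two cases $i \in \{i_0, \ldots, i_k\}$ and $i \notin \{i_0, \ldots, i_k\}$; in each case a direct expansion of the signed simplicial differential $d$ against the insertion formula defining $h^i$ yields the cancellations needed to leave the right-hand side. Granted these verifications, the homotopy transfer theorem endows $L$ with a complete $L_\infty$ structure $R$ and produces continuous $L_\infty$ morphisms $f:(L,R)\to(V,Q)$ and $g:(V,Q)\to(L,R)$ with linear parts $\pi^\ast$ and $e_i^\ast$ respectively. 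The formal Kuranishi theorem then yields verbatim a bijection $\op{MC}(V) \to \op{MC}(L) \times h^i(V^1)$ sending $\alpha$ to $(\op{MC}(g)(\alpha),\, h^i(\alpha))$.

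The final step is to identify $\op{MC}(g)(\alpha) = \sum_{k \geq 1} \tfrac{1}{k!} g_k(\alpha^{\odot k})$ with $e_i^\ast(\alpha)$, i.e.\ to show that the higher components $g_k$ contribute nothing on Maurer--Cartan elements. I expect this to be the main obstacle. Two avenues are available. The direct one uses the recursion $g_k = \sum_{j < k} g_j\, Q^j_k\, K^\Sigma_k$ and a short induction: every summand of $K^\Sigma_k$ places some $h^i$ in a tensor slot, and the outer evaluation by $e_i^\ast$ combined with the side condition $e_i^\ast h^i = 0$ eliminates the surviving contributions. The more conceptual alternative is to factor the contraction through $\widehat{\Omega}^\ast(\Delta_n; L) = \widehat{\Omega}^\ast(\Delta_n) \otimes L$, where evaluation at a vertex is a strict morphism of dg commutative algebras; functoriality of homotopy transfer for the composite Dupont-and-then-$h^i$ contraction identifies the transferred structure on $L$ with its original one and the transferred morphism $g$ with the strict vertex evaluation $e_i^\ast$. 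Either route gives $\op{MC}(g) = e_i^\ast$ on Maurer--Cartan solutions and completes the proof.
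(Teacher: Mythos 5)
Your proposal follows the paper's proof exactly: the paper likewise obtains the proposition by feeding the contraction $(\pi^\ast, e_i^\ast, h^i)$ into the formal Kuranishi theorem with $W=L$ and $V=C^*(\Delta_n;L)$, citing Bandiera for the verifications you spell out. The one point you rightly flag as the main obstacle --- identifying $\op{MC}(g)(\alpha)$ with $e_i^\ast(\alpha)$ --- is not addressed in the paper either, and of your two routes the second (strictness of vertex evaluation, coming from compatibility of the Dupont contraction with simplicial maps) is the one that actually closes it, since the side condition $e_i^\ast h^i=0$ alone cannot be pushed past the intervening brackets $Q^j_k$ in the recursion for $g_k$ without knowing that $e_i^\ast$ is a strict morphism for the transferred structure.
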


In other words, if we fix a Maurer Cartan element in $L$ and a cochain in $h^i(C^1(\Delta_n;L))$, we can recover the unique cochain in $\Del_\infty(L)_n$ by the recursive formula of the formal Kuranishi theorem.

This allows us to recover (slightly generalized) gauge action of $L^0$ on Maurer-Cartan solutions. 
Indeed, take $x \in \op{MC}(L) \subset L^1$ and $a \in L^0$ and consider the pair 
$(x, \alpha)$ where $\alpha \in h^0(C^1(\Delta_1; L) \subset C^0(\Delta_0; L)$ takes 
the value $a$ on the unique non-degenerate 0-simplex of $\Delta_0$. Then $z = (\rho^0)^{-1} (x, \alpha)
\in Del_\infty(L)_1$ and $\partial^*_0(z)$ is the Maurer-Cartan element that we denote by 
$a \cdot x \in L^1$. By Section 5.2 in \cite{bthesis} this gives the usual gauge action of $L^0$
on Maurer-Cartan elements when $L$ is a DGLA.

Now take any $x \in \op{MC}(L)$ (say $x=0$) and $a, b \in L^0$.  We want to get the Baker-Campbell-Hausdorff product of $a$ and $b$ through the horn filling in $\Del_\infty(L)_2$.   Consider the following 2-horn:  Put $x$ on the [1] vertex, $a$ on the [01] edge, $b$ on the [12] edge, and 0 on [012]
\[ \xymatrix{ & & x\ar[rrdd]^b & &\\ & & 0 & &\\ \ar[uurr]^a & & & &}\]
Now consider the 1-cochain $\alpha'$ in $h^1(C^1(\Delta_2;L)) \subset C^0(\Delta_2, L)$ with the only nonzero 
values on non-degenerate simplices in $\Delta_2$ given by 
\[\alpha'([0]) = a, \qquad \alpha'([2]) = b.\]

We can then apply the recursive formula from the formal Kuranishi theorem and get an unique cochain $\alpha \in \op{MC}(C^*(\Delta_2;L)) = \Del_\infty(L)_2$.  The Baker-Campbell-Hausdorff product $\rho^x_2(-)$ between the morphism $a$ and $b$ is then defined by evaluating $\alpha$ on the face $\partial_1 \Delta_2$ opposite to the vertex [1].
\[ \xymatrix{& & x\ar[rrdd]^b & &\\
	   & & 0 & &\\ 
	 \rho^x_1(a)=x'\ar[uurr]^a\ar[rrrr]_{\rho_2^x(a,b)} & & & &\rho^x_1(b)= x''}\]

Then  $\rho^x_2(a,b) = a*b$, the usual Baker-Campbell-Hausdorff product,
  by Proposition 5.2.36 in \cite{bthesis}.
In general we have higher general Baker-Campbell-Hausdorff products obtained 
by filling in higher dimensional horns but in this paper 
we only deal with  the  products between elements in $L^0$.

\section{Descent of Deligne groupoids}
Given a nilpotent Lie algebra $\g$ with the unipotent group $G = exp(\g)$
 and a $G$ torsor $P$ over $X$, the result of Hinich in 
 \cite{hinich}
 allows us to use combinatorial tools to study formal deformations of  $P$.  
 One can either work with the Thom-Whitney complex constructed from an
 affine covering of $X$, or a quasi-isomorphic $L_\infty$ algebra constructed
 on the underlying Cech complex. 
 
 This is a special case of the $L_\infty$ structure for semicosimplicial 
 Lie algebras (in degree 0) is studied by Fiorenza, Manetti, and Martinengo.  They show in \cite{manetti} that the solutions to the deformation equation (i.e. cocycle condition on transition functions) are exactly the Maurer Cartan solutions of the $L_\infty$  Cech complex and the equivalences of deformations are exactly the the equivalence of Maurer Cartan solutions.
We use Bandiera's reformulation of the result in \cite{manetti} using Deligne-Getzler $\infty$-groupoids.

\subsection{Semicosimplicial DGLAs, totalization and homotopy limit}
 
\begin{definition} A semicosimplicial differential graded Lie algebra
	$L_\bullet$ is  a covariant functor \(\underrightarrow{\Delta} \to DGLAs\) from the category \(\underrightarrow{\Delta}\), whose objects are finite ordinals and morphisms are order-preserving injective maps, to the category of DGLAs.  In other words, $L_\bullet$ is a diagram 
	\[
	\xymatrix{ L_\bullet : {L_0}
		\ar@<2pt>[r]\ar@<-2pt>[r] & { L_1}
		\ar@<4pt>[r] \ar[r] \ar@<-4pt>[r] & { L_2}
		\ar@<6pt>[r] \ar@<2pt>[r] \ar@<-2pt>[r] \ar@<-6pt>[r]&
		\cdots}
	\]
	where each \(L_i\) is a DGLA and the DGLA morphisms
		$\partial_{k,i}: L_{i-1} \to L_i,  k = 0, \dots, i,$ satisfy  $\partial_{k+1,i+1}\partial_{l,i} = \partial_{l,i+1}\partial_{k,i}$ for any $k \geq l$.
\end{definition}

A very important source of such objects comes from a space $X$ with an 
open covering $\{U_i\}$ and a sheaf of Lie algebras $\g$ on $U$.

\begin{definition}
	Let $X$ be a scheme over $k$, $\g$  a sheaf of Lie algebras on $X$, and $\mathcal{U}$ an affine open cover of $X$. Define the semicosimplicial Lie algebra
	\[
	\xymatrix{ \g(\mathcal{U})_\bullet : {\g(\mathcal{U})_0}
		\ar@<2pt>[r]\ar@<-2pt>[r] & {\g(\mathcal{U})_1}
		\ar@<4pt>[r] \ar[r] \ar@<-4pt>[r] & {\g(\mathcal{U})_2}
		\ar@<6pt>[r] \ar@<2pt>[r] \ar@<-2pt>[r] \ar@<-6pt>[r]&
		\cdots}
	\]
	where $\g(\mathcal{U})_q:=\prod_{i_0< \dots < i_q}\g(U_{i_0 \dots i_q})$ and arrows 
	are induced by restriction.  
\end{definition}

The usual Cech complex is $\g(\mathcal{U}) = \bigoplus_q \g(\mathcal{U})_q[-q]$ with a coboundary operator $\delta_k = \sum_{k = 0}^q (-1)^k \partial_{k, q}$.

\bigskip
\noindent 
There are a few homotopy equivalent ways of defining the homotopy limit of a semicosimplical DGLA.  One is  the Thom-Whitney-Sullivan construction \cite{manetti}.  The resulting object is a DGLA, with a simpler Maurer Cartan equation, but the complex is quite large and harder
to interpret geometrically.   Bandiera introduced a smaller version of the homotopy limit with an $L_\infty$ structure through homotopy transfer.   In the case of interest to us, it is isomorphic to the Cech complex as a DG vector space. 

\begin{definition}
	Given a semicosimplcial complete DGLA $L_\bullet\in\widehat{\mathbf{DGLA}}^{\underrightarrow{\Delta}}$, its \textbf{Thom-Whitney complex} is the complete DGLA
	\[  \operatorname{Tot_{TW}}(L_\bullet)=\left\{ (\alpha_0,\ldots,\alpha_n,\ldots)\in\prod_{n\geq0}\widehat{\Omega}^*(\Delta_n;L_n)\,\,\operatorname{s.t.}\,\,\partial^j_\ast(\alpha_{n-1}) =\delta^*_j(\alpha_n) \right\}  \] 
	where the morphism $\partial^j_\ast:\widehat{\Omega}^*(\Delta_{n-1};L_{n-1})\to \widehat{\Omega}^*(\Delta_{n-1};L_{n})$ is the push-forward by the $j$-th cofaces of $L_\bullet$ and $\delta_j^*:\widehat{\Omega}^*(\Delta_{n};L_{n})\to \widehat{\Omega}^*(\Delta_{n-1};L_{n})$ is the pull back by the $j$-th coface of $\underrightarrow{\Delta}$. The Thom-Whitney complex inherits 
	a DGLA structure from $L_\bullet$ since the product on differential forms
	is graded commutative. 
\end{definition}

\begin{definition}
	Given a semicosimplcial complete DGLA $L_\bullet\in\widehat{\mathbf{DGLA}}^{\underrightarrow{\Delta}}$, its \textbf{totalization} $\operatorname{Tot}(L_\bullet)$ is the complete $L_\infty$ algebra 
	\[  \operatorname{Tot}(L_\bullet)=\left\{ (\alpha_0,\ldots,\alpha_n,\ldots)\in\prod_{n\geq0}C^*(\Delta_n;L_n)\,\,\operatorname{s.t.}\,\,\partial^j_\ast(\alpha_{n-1}) =\delta^*_j(\alpha_n) \right\}  \] 
	where the morphism $\partial^j_\ast:C^*(\Delta_{n-1};L_{n-1})\to C^*(\Delta_{n-1};L_{n})$ is the push-forward by the $j$-th cofaces of $L_\bullet$ and $\delta_j^*:C^*(\Delta_{n};L_{n})\to C^*(\Delta_{n-1};L_{n})$ is the pull back by the $j$-th coface of $\underrightarrow{\Delta}$. The $L_\infty$-structure on 
	 $\operatorname{Tot}(L_\bullet)$ is induced from the DGLA structure of
	 $\operatorname{Tot_{TW}}(L_\bullet)$ via the contraction induced
	 by the contraction of differential forms $\Delta_n$ onto cochains on 
	 $\Delta_n$ (with a fixed choice of Dupont homotopy, cf. \cite{getzler}.) 
\end{definition}

\begin{proposition}
	In $L_\bullet$ is a semicosimplicial Lie algebra (i.e. all $L_k$ are in 
	homological degree zero) then underlying dg vector space of $\Tot(L_\bullet)$ is the complex
	$L_0 \to L_1 \to L_2 \ldots$ with the differential $\sum_j (-1)^j \partial_{j, q}$. 
\end{proposition}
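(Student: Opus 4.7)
The plan is to evaluate each compatible family at a single top-dimensional cell in each $\Delta_k$ and use the compatibility conditions to recover everything else. Since every $L_n$ is concentrated in degree zero with trivial differential, $C^*(\Delta_n;L_n) = C^*(\Delta_n)\otimes L_n$ is graded purely by simplicial cochain degree and its differential is the simplicial coboundary tensored with $\operatorname{id}_{L_n}$. Thus the degree $k$ part of $\Tot(L_\bullet)$ consists of compatible families $(\alpha_n)_{n\geq 0}$ where $\alpha_n \in C^k(\Delta_n;L_n)$ is an $L_n$-valued function on the non-degenerate $k$-simplices of $\Delta_n$, i.e. on the $(k+1)$-element subsets $\{i_0<\cdots<i_k\}\subseteq\{0,\ldots,n\}$.

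First I would note that for $n<k$ there are no such subsets, so $\alpha_n=0$ automatically; for $n=k$ the only non-degenerate $k$-simplex is the top face $[0,1,\ldots,k]$, on which $\alpha_k$ records a single element $a\in L_k$; and for $n>k$ every non-degenerate $k$-simplex $\{i_0<\cdots<i_k\}$ omits at least one vertex of $\Delta_n$, hence lies in the image of some composition of cosimplicial face inclusions $\Delta_k\hookrightarrow\Delta_n$. Iterating the compatibility relation $\delta_j^\ast(\alpha_n)=\partial_\ast^j(\alpha_{n-1})$ along such a composition would yield
\[
\alpha_n(\{i_0<\cdots<i_k\}) \;=\; F_{i_0,\ldots,i_k}(a),
\]
where $F_{i_0,\ldots,i_k}\colon L_k\to L_n$ is the composition of cofaces in $L_\bullet$ corresponding to the complementary set $\{0,\ldots,n\}\setminus\{i_0,\ldots,i_k\}$. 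This proves injectivity of the evaluation map $\op{ev}\colon \Tot^k(L_\bullet)\to L_k$, $(\alpha_n)\mapsto \alpha_k([0,\ldots,k])$. For surjectivity I would define $\alpha_n$ from any chosen $a\in L_k$ by the above formula and check the compatibility via the cosimplicial identities.

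Next, to identify the differential, I would evaluate the simplicial coboundary on the top $(k+1)$-simplex of $\Delta_{k+1}$. By the definition of $d$ on $C^*(X)$,
\[
(d\alpha_{k+1})([0,\ldots,k+1]) \;=\; \sum_{j=0}^{k+1}(-1)^j\,\alpha_{k+1}([0,\ldots,\hat{\jmath},\ldots,k+1]),
\]
and each face $[0,\ldots,\hat{\jmath},\ldots,k+1]$ is precisely the image of the $j$-th coface inclusion $\delta_j\colon\Delta_k\to\Delta_{k+1}$. The compatibility relation then rewrites $\alpha_{k+1}([0,\ldots,\hat{\jmath},\ldots,k+1]) = \partial_{j,k+1}(\alpha_k([0,\ldots,k])) = \partial_{j,k+1}(a)$, so the total sum becomes $\sum_{j=0}^{k+1}(-1)^j\partial_{j,k+1}(a)$, exactly the asserted Cech differential.

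The main obstacle I anticipate is only a bookkeeping one: in the surjectivity step, one must check that the two different ways of expressing an inclusion $\{i_0,\ldots,i_k\}\subset\{0,\ldots,n\}$ as successive omissions of individual vertices produce the same composition $F_{i_0,\ldots,i_k}$. This reduces to repeated application of the cosimplicial identity $\partial_{k+1,i+1}\partial_{l,i}=\partial_{l,i+1}\partial_{k,i}$ for $k\geq l$ that is built into the definition of $L_\bullet$. Everything else is routine indexing.
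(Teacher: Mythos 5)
Your proposal is correct and follows essentially the same route as the paper's proof: identify a degree-$k$ element by its value on the top cell of $\Delta_k$, use the compatibility relations $\partial^j_\ast(\alpha_{n-1})=\delta_j^\ast(\alpha_n)$ to determine all higher $\alpha_n$, and match the simplicial coboundary on $\Delta_{k+1}$ to the alternating sum of cofaces. You are somewhat more careful than the paper in spelling out surjectivity and the well-definedness of the reconstruction via the cosimplicial identities, but this is a refinement of the same argument rather than a different one.
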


\begin{proof}
	A degree $m$ element $\alpha$ in $\operatorname{Tot}(L_\bullet)$ is of the form $\alpha = (\alpha_0, \dots, \alpha_n, \dots)$, where $\alpha_n$ is 
	 in $C^m(\Delta_n; L_n)$ (with additional compatibility 
	conditions on $\alpha_n$). Since $L_m$ is in homological degree zero, 
	$\alpha_m$ only takes nonzero value on (nondegenerate) $m$-simplicies of $\Delta_n$. In particular $\alpha_n = 0$ for $n < m$. 
	
	  For $n >  m$, by definiton of $Tot$ evaluation of $\alpha_n$ on $m$-simplices of $\Delta_n$ is given by restricting to an $(n-1)$ dimensional 
	  face containing and $m$-simplex and then computing the value by applying
	  a cosimplicial map to the value of $\alpha_{n-1}$. Thus, a degree $m$ cochain $\alpha$
	  is uniquely determined by $\alpha_m$.  
	  
	  An explicit check 
	  also shows that this correspondence is compatible with differentials: if $l$ is  an element  in $L_k$, then $d(l) = \sum^{k+1}_{j=0} (-1)^j \partial_{j, k+1} (l) \in L_{k+1}$.  Now consider $\alpha$ an element of degree $k$ in $\operatorname{Tot}(L_\bullet)$ whose value on the $k$- simplices in $\Delta_k$ is $l$.  $\alpha$ is of the form $\alpha = (0, \dots, 0, \alpha_k, \alpha_{k+1}, \dots)$ and $d(\alpha) = (\delta^*(0), \dots, \delta^*(0), \delta^*(\alpha_k), \delta^*(\alpha_{k+1}), \dots)$.
	
	Note that $\delta^*(\alpha_i) = \partial_*(\alpha_{i-1})$ by the construction of $\operatorname{Tot}$ and $\alpha_{k-1}$ is the 0 cochain, so $\delta^*(\alpha_k) = 0$.  By the same reasoning we have $\delta^*(\alpha_{k+1}) = \partial_*(\alpha_k)$, whose evaluation at the $k+1$ simplex in $\Delta_{k+1}$ is $\sum^{k+1}_{j=0} (-1)^j \partial_j (l) = d(l)$, so we have $d(\alpha) = (0, \dots, 0, 0, \delta^*(\alpha_{k+1})=d(l), \dots)$.  Thus by our previous discussion, $d(\alpha)$ must be a degree $k+1$ element in $\operatorname{Tot}(L_\bullet)$ which under our bijection will precisely be $d(l)$ in $\displaystyle \bigoplus_nL_n[-n]$.
\end{proof}
\bigskip
\noindent
\textbf{Notation.} In the geometric situation with the sheaf of Lie algebras $\g$
and an open covering $\mathcal{U}$ we will denote $Tot(\mathfrak{g}(\mathcal{U})_\bullet)$ by 
$\mathcal{L}(\mathfrak{g})$ and $\TW(\mathfrak{g}(\mathcal{U})_\bullet) $ by 
$\widehat{\mathcal{L}}(\g)$. We will also use $[-,-]_{\g}$ to denote $[-,-]_{\TW(\g)}$. By the previous result, $\mathcal{L}(\mathfrak{g})$ is an $L_\infty$ algebra 
for which the underlying vector space is just the Cech complex of $\mathfrak{g}$
with respect to $\mathcal{U}$. It also depends on the choice of Dupont homotopy 
but with suppress both dependences (on the covering and on the homotopy) 
from notation, assuming that both are fixed. 

\subsection{Theorems on Descent of Deligne Groupoids}
Now we  state Hinich's theorem on descent of Deligne groupoid.  We will also 
sketch a proof of  Fiorenza, Manetti, and Martinengo's result that  for a semicosimplical DGLA  in degree 0, we get an \textit{isomorphism} of groupoids.



\begin{theorem}[Hinich, \cite{hinich}]
	For semicosimplicial DGLAs $L_\bullet$ concentrated in non negative degrees, the Deligne functor commutes with homotopy limits, i.e., there is a natural equivalence of groupoids
	\[\operatorname{Del}(\operatorname{Tot}(L_\bullet)) \simeq \operatorname{Tot}(\operatorname{Del}(L_\bullet)).\]
\end{theorem}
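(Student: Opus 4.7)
The plan is to lift the statement to the level of Deligne-Getzler $\infty$-groupoids, where the Maurer-Cartan functor commutes with homotopy limits essentially formally, and then descend back to groupoids via the fundamental groupoid functor. The key identification, from Proposition \ref{groupoid-properties}(c), is that for a non-negatively graded nilpotent $L_\infty$-algebra $L$ one has $\Del(L) \simeq \pi_{\leq 1}\Delf(L)^{\op}$. Since $\Tot(L_\bullet)$ is obtained from $\TW(L_\bullet)$ via homotopy transfer along a Dupont contraction, the induced $L_\infty$ morphism $\Tot(L_\bullet) \to \TW(L_\bullet)$ is a quasi-isomorphism, and hence $\MC_\infty$ sends it to a weak equivalence of Kan complexes. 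So I may work with the genuine DGLA $\TW(L_\bullet)$ instead of the $L_\infty$-algebra $\Tot(L_\bullet)$, which simplifies the Maurer-Cartan analysis.

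The core computation is then to identify $\MC_\infty(\TW(L_\bullet))$ with the homotopy limit $\op{holim}_{\Deltau}\MC_\infty(L_\bullet)$ as Kan complexes. By definition
$$\MC_\infty(\TW(L_\bullet))_n = \MC\bigl(\Omega^*(\Delta_n) \otimes \TW(L_\bullet)\bigr),$$
and tensoring with $\Omega^*(\Delta_n)$, being exact, distributes over the equalizer defining $\TW(L_\bullet)$. Using the identification $\Omega^*(\Delta_n) \otimes \hat{\Omega}^*(\Delta_m; L_m) \cong \hat{\Omega}^*(\Delta_n \times \Delta_m; L_m)$ together with the fact that $\MC$ commutes with limits (it is cut out by a degreewise quadratic equation inside degree $1$), this equalizer of Maurer-Cartan sets matches the standard end formula for the homotopy limit of the semicosimplicial Kan complex $\MC_\infty(L_\bullet)$. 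This produces a natural map of simplicial sets which is a weak equivalence level by level.

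Finally, applying $\pi_{\leq 1}$ and invoking Proposition \ref{isom-groupoids} yields an equivalence of groupoids
$$\Del(\Tot(L_\bullet))^{\op} \simeq \pi_{\leq 1}\op{holim}_{\Deltau}\MC_\infty(L_\bullet),$$
and reversing opposites gives the claim, provided we interpret $\Tot(\Del(L_\bullet))$ as the homotopy limit of the semicosimplicial groupoid $\Del(L_\bullet)$, which by the non-negative grading hypothesis coincides with $\pi_{\leq 1}$ of the homotopy limit of the levelwise nerves. The main obstacle lies precisely in justifying this last identification: verifying that $\pi_{\leq 1}$ commutes with the $\Deltau$-indexed homotopy limit of $\MC_\infty(L_\bullet)$ requires controlling the interaction between $1$- and $2$-simplices across levels, and it is the non-negative grading of $L_\bullet$ that bounds the homotopical complexity at each stage and makes the descent clean.
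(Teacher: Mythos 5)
First, a point of comparison: the paper does not prove this statement. It is quoted as Hinich's theorem and used as a black box --- the text explicitly declines to give details and instead proves the degree-zero refinement due to Fiorenza--Manetti--Martinengo (Theorem \ref{importanttheorem}) by a direct comparison of $0$-, $1$- and $2$-simplices on both sides. So there is no internal proof to measure your argument against; what can be judged is whether your outline would stand on its own as a proof of the cited result.

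As an outline it follows the standard simplicial route (pass to $\MC_\infty$, identify the totalization of the Maurer--Cartan simplicial sets, truncate with $\pi_{\leq 1}$), but it has a genuine gap, and you name it yourself: the final identification of $\pi_{\leq 1}$ of the $\Deltau$-indexed totalization of $\MC_\infty(L_\bullet)$ with the descent groupoid $\Tot(\Del(L_\bullet))$ is essentially the content of Hinich's theorem, not a formality that can be deferred; an argument ending with ``the main obstacle lies precisely in justifying this last identification'' has not proved the theorem. Two further steps are glossed over. First, your core computation at best identifies $\MC_\infty(\TW(L_\bullet))_n$ with the \emph{strict} end of the sets $\op{SSet}(\Delta_n\times\Delta_m,\MC_\infty(L_m))$; for this strict totalization to compute a homotopy limit one needs the semicosimplicial Kan complex $\MC_\infty(L_\bullet)$ to be suitably fibrant (matching maps being Kan fibrations), which is where nilpotence and the behaviour of $\MC_\infty$ on surjections of nilpotent DGLAs actually enter --- this is the technical heart of Hinich's proof and is absent here. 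Moreover, distributing $\Omega^*(\Delta_n)\otimes(-)$ over the equalizer of infinite products defining $\TW(L_\bullet)$ requires the completed tensor product and a justification, since the plain tensor product does not commute with infinite products. Second, even granting the homotopy limit identification, commuting $\pi_{\leq 1}$ past the totalization is exactly the coskeletal/truncation analysis that the paper's proof of Theorem \ref{importanttheorem} performs by hand in the degree-zero case; for a complete argument you would need either to carry out that comparison for general non-negatively graded $L_\bullet$ or to follow Hinich's induction over the nilpotency filtration.
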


$\operatorname{Tot}(\operatorname{Del}(L_\bullet))$ (the left hand side) is called the groupoid of descent data on $L_\bullet$.  In the case where $L_\bullet$ is a secosimplicial Lie algebra, its objects are the nonabelian 1-cocycles
\[Z^1(\exp(L_1)) = \{m \in L_1|e^{\partial_0(m)}e^{-\partial_1(m)}e^{\partial_2(m)} = 1\}\]
and its morphisms between two cocycles $m_0$ and $m_1$ are
\[\{a \in L_0|e^{-\partial_1(a)}e^{m_1}e^{\partial_0(a)} = e^{m_0}\}\]
We will give details on the above theorem but rather move to the case when $L_\bullet$
is formed by Lie algebras in degree zero (as it happens for the geometric situation involving an
open cover and a sheaf of Lie algebras). Then Fiorenza, Manetti, and Martinengo proved in \cite{manetti} that instead of just equivalence, we are getting an \textit{isomorphism} of groupoids, that is, the nonabelian 1-cocyles, as a subset of $\Tot(L_\bullet)$, are the same as solutions of the $L_\infty$ Maurer Cartan equation on $\Tot(L_\bullet)$, and that two nonabelian cocyles are equivalent iff they are equivalent Maurer Cartan elements.
We sketch the proof briefly, focusing on the parts which will be needed later for 
studying torsors. 

\begin{theorem} [Fiorenza, Manetti, and Martinengo, \cite{manetti}]
	\label{importanttheorem}
	For semicosimplicial Lie algebra $L_\bullet$, there is an isomorphism of $\infty$-groupoids
	\[\Delf(\operatorname{Tot}(L_\bullet)) \cong \operatorname{Tot}(\Delf(L_\bullet)).\]
	and thus an isomorphism of groupoids
	\[\Del(\operatorname{Tot}(L_\bullet)) \cong \operatorname{Tot}(\Del(L_\bullet)).\]
\end{theorem}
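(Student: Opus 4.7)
The plan is to bootstrap Hinich's equivalence into a strict isomorphism by exhibiting, at each simplicial level, a literal bijection between the sets and then invoking Proposition \ref{isom-groupoids}. First I would use the proposition identifying the underlying complex of $\Tot(L_\bullet)$ with $L_0 \to L_1 \to L_2 \to \cdots$ to fix the ambient home of everything: Maurer-Cartan elements of $\Tot(L_\bullet)$ sit inside $\Tot(L_\bullet)^1 = L_1$, and gauge parameters inside $\Tot(L_\bullet)^0 = L_0$. On the other side, objects of $\Tot(\Del(L_\bullet))$ are already described as nonabelian $1$-cocycles $m \in L_1$ satisfying $e^{\de_0 m} e^{-\de_1 m} e^{\de_2 m} = 1$ and morphisms as elements $a \in L_0$ with $e^{-\de_1 a} e^{m_1} e^{\de_0 a} = e^{m_0}$, so the two sides are a priori subsets of the same ambient data.

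The core computation is the equation-matching. For $m \in L_1$, I would expand the $L_\infty$ Maurer-Cartan equation
\[
\sum_{i \geq 1} \frac{1}{i!} q_i(m, \ldots, m) = 0
\]
in $\Tot(L_\bullet)$ and identify it with the nonabelian cocycle condition in $\exp(L_2)$. The key input is that the $L_\infty$ brackets on $\Tot(L_\bullet)$ are obtained by homotopy transfer along the Dupont contraction from the Thom-Whitney DGLA $\TW(L_\bullet)$. By Bandiera's formal Kuranishi theorem and the horn-filling interpretation of BCH recalled in Section 3.3 (where $\rho^x_2(a,b) = a \ast b$), the transferred higher brackets assemble the Maurer-Cartan sum into precisely $\log\!\bigl(e^{\de_0 m} e^{-\de_1 m} e^{\de_2 m}\bigr)$, with the signs coming from the alternating coface maps. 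The gauge action of $a \in L_0$ on $m$, constructed in Section 3.3 via the inverse of the formal Kuranishi bijection $\rho^0$, is then identified with the group-theoretic conjugation formula by the same horn-filling argument applied in $\Del_\infty(\Tot(L_\bullet))_1$.

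Once objects and morphisms are matched as subsets of $L_1$ and $L_0$, Hinich's theorem supplies a natural equivalence $\Del(\Tot(L_\bullet)) \simeq \Tot(\Del(L_\bullet))$ whose object sets agree, so Proposition \ref{isom-groupoids} upgrades it to an isomorphism of groupoids. To promote this to an isomorphism of $\infty$-groupoids, I would run the same argument one dimension higher: an $n$-simplex of $\Delf(\Tot(L_\bullet))$ is a Maurer-Cartan cochain on $\Delta_n$ valued in $\Tot(L_\bullet)$, and the formal Kuranishi / horn-filling machinery again encodes it as compatible iterated BCH data, which matches the $n$-simplices of $\Tot(\Delf(L_\bullet))$; here one uses that since each $L_i$ is concentrated in degree zero, Proposition \ref{groupoid-properties}(c) identifies $\Delf(L_i)$ with the nerve of $\exp(L_i)^{\op{op}}$, so the right-hand side is exactly a cosimplicial assembly of group cocycle data.

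The main obstacle is the explicit matching of the transferred $L_\infty$ Maurer-Cartan equation with the group-theoretic cocycle equation: this requires chasing the Dupont homotopy through the transfer formulas of Section 2.2 and recognizing the successive higher-bracket contributions as the BCH expansion via the horn-filling interpretation. Everything else — the promotion from equivalence to isomorphism, and the simplex-by-simplex extension — reduces to bookkeeping once that identification is in place.
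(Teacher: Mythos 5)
Your core argument is the same as the paper's: both proofs come down to explicitly identifying the simplices of $\Delf(\Tot(L_\bullet))$ and $\Tot(\Delf(L_\bullet))$ inside the common ambient data $L_0$, $L_1$, using the horn-filling/BCH interpretation to turn the transferred $L_\infty$ Maurer--Cartan equation into the nonabelian cocycle condition $e^{\de_0 m}e^{-\de_1 m}e^{\de_2 m}=1$ and the gauge action into conjugation, and both defer the same Dupont-homotopy computation that you flag as the main obstacle. The structural differences are worth noting. The paper proves the $\infty$-groupoid isomorphism first: since $\Tot(L_\bullet)$ and each $L_i$ are concentrated in non-negative degrees, both sides are nerves of groupoids (Proposition \ref{groupoid-properties}(c), plus the observation that $\Tot$ commutes with $\mathcal{N}$), hence are $2$-coskeletal and determined by their $0$-, $1$-, and $2$-simplices; the check therefore stops at dimension $2$ (the $2$-simplices encoding composition via BCH), and the groupoid statement follows by applying $\pi_{\leq 1}(\cdot)^{\op{op}}$. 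You instead propose checking all $n$-simplices, which is more work than necessary given the nerve identification you yourself invoke. More importantly, your detour through Hinich's theorem plus Proposition \ref{isom-groupoids} is either redundant or insufficient: if you have already matched objects \emph{and} morphisms bijectively and compatibly with composition, that is the isomorphism and Hinich adds nothing; if you have only matched object sets, an abstract equivalence with equal object sets does not produce the specific isomorphism claimed (and says nothing about the $\infty$-groupoid statement). So the load-bearing step in both your proof and the paper's is the explicit simplex-level matching, not the upgrade of Hinich's equivalence.
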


First, we should define the totalization of the semicosimplical simplicial set $\operatorname{Del}_\infty(L_\bullet)$ (the right hand side of our isomorphism).  The totalization of semicosimplical simplicial sets is defined the same way as the totalization of semicosimplicial complete $L_\infty$ algebras by simply replacing $C^*(\Delta_i; L_i)$ with $\underline{\operatorname{SSet}}(\Delta_i, L_i)$.  This totalization anjots a universal property 
similar to the totalization of semicosimplicial DGLA.  Using this definition, we have
\[\operatorname{Tot}(\operatorname{Del}_\infty(L_\bullet)) = \left\{ (\alpha_n) \in  \prod_{n \geq 0} \underline{\operatorname{SSet}}(\Delta_n, \operatorname{Del}_\infty(L_n)) \,\,\vert\,\,\partial^j_\ast(\alpha_{n-1}) =\delta^*_j(\alpha_n)\right\}.\]
Notice that $\underline{\operatorname{SSet}}(X, Y)_n = \operatorname{SSet}(\Delta_n \times X, Y)$, so 
$$\operatorname{Tot}(\operatorname{Del}_\infty(L_\bullet))_i = \{ (\alpha_n) \in  \prod_{n \geq 0} \operatorname{SSet}(\Delta_i \times \Delta_n, \operatorname{Del}_\infty(L_n)) \,\,\vert\,\,\partial^j_\ast(\alpha_{n-1}) =\delta^*_j(\alpha_n)\}$$

\begin{proof}
	Since $\Tot(L_\bullet)$ is concentrated in non negative degrees, by Proposition \ref{groupoid-properties} $\Delf(\Tot(L_\bullet)) = \mathcal{N}(\Del^{\op{op}}(\Tot(L_\bullet)))$.  Hence $\Delf(\Tot(L_\bullet))$ is uniquely determined by its 0-simplices, 1-simplices and 2-simplices.  On the other hand, since $L_i$'s are all concentrated in degree 0, $\Delf(L_i) = \mathcal{N}(\Del^{\op{op}}(L_i))$.  
	It is easy to check from definition that $Tot$ and $\mathcal{N}$ commute in this case.
	Thus, $\Delf(Tot(L_\bullet))$ is also uniquely determined by the 0,1,2-simplices and same holds for its homotopy limit $\Tot(\Delf(L_\bullet))$.
	
	Thus we will be comparing the 0,1,2-simplices for $\operatorname{Del}_\infty(\operatorname{Tot}(L_\bullet))$ and $\operatorname{Tot}(\operatorname{Del}_\infty(L_\bullet))$.  
	Using results of the previous sections to untangle definitions (we omit 
	the straightforward computational details), we see that on both 
	sides simplices have identical descriptions. We record them for future use.

	\bigskip
	\noindent
	\textbf{0-simplices} of both  $\operatorname{Del}_\infty(\operatorname{Tot}(L_\bullet))$ and 
	$\operatorname{Tot}(\operatorname{Del}_\infty(L_\bullet))$ can be
	identified with $\alpha_1 \in C^1(\Delta_1, L_1) = L_1$ such that 
	$\partial^0_\ast \alpha_1 \circ(- \partial^1_\ast \alpha_1) \circ \partial^2_\ast \alpha_1 = 0,$
	i.e with nonabelian 1-cocycles in $\Tot(L_\bullet)$ ($\circ$ denotes the CBH product).

	\bigskip
	\noindent
	\textbf{1-simplices}
	of both $\operatorname{Del}_\infty(\operatorname{Tot}(L_\bullet))$ and 
	$\operatorname{Tot}(\operatorname{Del}_\infty(L_\bullet))$ are in bijection with the set of $\alpha_1, \alpha'_1 \in L_1$ and $l_0 \in L_0$ such that the diagram below commutes:
	\[
	\xymatrix{
		\partial_\ast^0(0)\ar[rrr]^{\alpha_1}\ar[ddd]_{\partial_\ast^0(l_0)}& & &\partial_\ast^1(0) \ar[ddd]^{\partial_\ast^1(l_0)}\\ \\ \\ 
		\partial_\ast^0(0)\ar[rrr]^{\alpha'_1}& & &\partial_\ast^1(0)
	}
	\]

\bigskip
\noindent
	\textbf{2-simplices}
	 of $\operatorname{Del}_\infty(\operatorname{Tot}(L_\bullet))$ 
	  of the following form:
	\[ \xymatrix{ & & a\ar[rrdd]^{l_1} & &\\ & & & &\\ l_0 \cdot a\ar[uurr]^{l_0} \ar[rrrr]_{l_2 = l_0 \circ l_1}& & & & -l_1 \cdot a}\]
	where $a$ is a 1-cocycle of the $\Tot((L_\bullet)$, $l_0 \cdot a$ and $-l_1 \cdot a$ are the resulting 1-cocycles when $l_0$ and $-l_1$ act on $a$, $l_0, l_1 \in L_0$ and their composition is given by the Baker-Campbell-Hausdorff formula in $L_0$.
	
	\medskip
	\noindent
	We proved an isomorphism of $\infty$-groupoids 
	$$\Delf(\operatorname{Tot}(L_\bullet)) \cong \operatorname{Tot}(\Delf(L_\bullet))
	$$
	Applying $\pi_{\leq 1}(\ldots)^{op}$  gives 
	$\Del(\operatorname{Tot}(L_\bullet)) \cong \operatorname{Tot}(\Del(L_\bullet))$.
\end{proof}

\subsection{Unipotent torsors and deformations.}

We return to the setting of our Introduction, with an algebraic scheme $X$, its
affine open cover $\mathcal{U} = \{U_i\}$ and a $G$-torsor $P$ give by the transition 
functions  $\Phi_{ij}: U_i \cap U_j \to G$ viewed as exponents of $\varphi_{ij}: U_i \cap U_j \to \g$.
The usual cocycle condition 
$\Phi_{ij} \Phi_{jk} = \Phi_{ik}$ can be understood as a condition imposed on the 
element $\varphi = (\varphi_{ij}) \in \mathcal{L}(\g)^1$ in the degree 1 component of the 
Cech $L_\infty$ algebra of the sheaf of $\g$-valued functions on $X$. 

Similarly, a change of trivialization of $P$ on each $U_i$ is give by a collection of
regular maps $\Sigma_i: U_i \to G$, which changes the cocycle as follows: 
\[\Phi_{ij} \mapsto \Sigma_{i}^{-1} \Phi_{ij} \Sigma_{j}\]
Define the groupoid  $Tors(X, P, \mathcal{U})$ of $P$-torsors on $X$
 (with respect to the fixed choice of an affine cover)
by viewing cocycles $\Phi_{ij}$ as objects and changes of trivializations $\Sigma_j$ as 
morphisms. Composition of morphisms  is the obvious product of $G$ valued cocycles
(which can be rephrased in terms of CBH product on elements $\sigma = (\sigma_j) 
\in \mathcal{L}(\g)^0$). Comparing this with the descriptions of 0, 1, and 2-simplices in the
proof of \ref{importanttheorem} we obtain a
\begin{corollary}
There exists an isomorphism the Deligne groupoid of the Cech $L_\infty$-algebra 
of the sheaf of $\g$-valued regular functions, and $Tors(X, G, \mathcal{U})$.
\end{corollary}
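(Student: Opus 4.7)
The plan is to deduce the corollary from Theorem \ref{importanttheorem} by specializing to the semicosimplicial Lie algebra $\g(\mathcal{U})_\bullet$ associated to the open cover $\mathcal{U}$ and identifying the right-hand side of the resulting isomorphism with $Tors(X,G,\mathcal{U})$. First I would observe that $\mathcal{L}(\g)=\Tot(\g(\mathcal{U})_\bullet)$ by our notation, so Theorem \ref{importanttheorem} immediately yields an isomorphism of groupoids
\[
\Del(\mathcal{L}(\g))\cong \Tot(\Del(\g(\mathcal{U})_\bullet)).
\]
It therefore suffices to construct an isomorphism of groupoids between the target on the right and $Tors(X,G,\mathcal{U})$. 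The data needed for this identification is already assembled in the proof of Theorem \ref{importanttheorem}, where the 0-, 1-, and 2-simplices of $\Tot(\Delf(L_\bullet))$ were written out explicitly; I simply need to interpret these in the case $L_\bullet = \g(\mathcal{U})_\bullet$.

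Next I would match objects. A 0-simplex corresponds to an element $\alpha_1 \in L_1 = \g(\mathcal{U})_1 = \prod_{i<j}\g(U_{ij})$, i.e.\ a collection $\varphi = (\varphi_{ij})$ of $\g$-valued regular functions, subject to the nonabelian cocycle condition $\partial^0_\ast\alpha_1\circ(-\partial^1_\ast\alpha_1)\circ\partial^2_\ast\alpha_1 = 0$, where $\circ$ is the CBH product. Passing through the exponential, this reads exactly as $\Phi_{jk}\Phi_{ik}^{-1}\Phi_{ij}=1$ on triple intersections $U_{ijk}$, which is the cocycle condition defining a $G$-torsor via transition functions $\Phi_{ij}=\exp(\varphi_{ij})$. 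Thus objects of $\Tot(\Del(\g(\mathcal{U})_\bullet))$ are in natural bijection with objects of $Tors(X,G,\mathcal{U})$.

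Then I would match morphisms. A 1-simplex in $\Tot(\Del(\g(\mathcal{U})_\bullet))$ from $\alpha_1$ to $\alpha'_1$ is given by a $l_0\in L_0 = \prod_i\g(U_i)$, i.e.\ a collection $\sigma = (\sigma_i)$ of $\g$-valued functions on the $U_i$, whose action identifies $\alpha_1$ with $\alpha'_1$ by the gauge action coming from $\partial^0_\ast(l_0)$ on one end and $\partial^1_\ast(l_0)$ on the other. Under the exponential map this action is precisely $\Phi_{ij}\mapsto \Sigma_i^{-1}\Phi_{ij}\Sigma_j$ with $\Sigma_i=\exp(\sigma_i)$, which is the notion of morphism in $Tors(X,G,\mathcal{U})$. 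The compatibility of composition is read off from the 2-simplex description in the proof of Theorem \ref{importanttheorem}: the composite of $l_0$ and $l_1$ is given by the CBH product in $L_0$, which corresponds under $\exp$ to the ordinary pointwise product of $G$-valued changes of trivialization. Associativity and units then follow from the groupoid structure on both sides.

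The main obstacle, which is essentially bookkeeping, is to pin down all sign and ordering conventions so that the CBH composition in the Deligne groupoid matches precisely the product of $G$-valued cocycles (and their conjugation action on transition functions) in $Tors(X,G,\mathcal{U})$; one needs to check that the semicosimplicial face maps $\partial^0_\ast,\partial^1_\ast,\partial^2_\ast$ coming from restrictions to double intersections translate, after exponentiation, into the standard pairings $\Phi_{jk}$, $\Phi_{ik}$, $\Phi_{ij}$ in the expected order. Once this is verified, the isomorphism of groupoids $\Tot(\Del(\g(\mathcal{U})_\bullet))\cong Tors(X,G,\mathcal{U})$ is immediate, and composing with the isomorphism from Theorem \ref{importanttheorem} yields the desired result.
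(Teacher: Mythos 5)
Your proposal is correct and follows essentially the same route as the paper: the paper also deduces the corollary by comparing the explicit descriptions of the 0-, 1-, and 2-simplices from the proof of Theorem \ref{importanttheorem} (nonabelian cocycles, gauge morphisms, and CBH composition) with the objects, morphisms, and composition in $Tors(X,G,\mathcal{U})$ after exponentiation. Your write-up merely makes explicit the bookkeeping of face maps and orderings that the paper leaves as a one-line comparison.
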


\bigskip
\noindent
A version of this statement can be formulated for the problem of deforming a fixed
$G$-torsor $P$ over an Artinian $k$-algebra $A = k \oplus m_A$ (with finite dimensional
$m_A$, for the sake of simplicity). Then we 
fix a covering $\mathcal{U}$, a cocycle giving $P$ and reduce the question of 
constructing a deformation to the question of finding an element 
$\phi \in \mathcal{L}(\g_P \otimes m_A)$ satisfying appropriate cocycle 
condition. Here $\g_P$ is the adjoint sheaf of Lie algebras, associated to $P$. 
As our further discussion will be a generalization of this picture, we
leave the detailed discussion until later, just stating here the
\begin{corollary}
	There is an isomorphism between the Deligne groupoid of the 
	Cech $L_\infty$ algebra of $\g_P \otimes m_A$ and the groupoid of 
	deformations of $P$ over the spectrum of the Artinian algebra $A = k \oplus m_A$. 
\end{corollary}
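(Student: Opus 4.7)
The plan is to imitate the proof of the preceding corollary, but with the sheaf of $\g$-valued regular functions replaced by $\g_P \otimes m_A$, and then to apply Theorem~\ref{importanttheorem} to the semicosimplicial Lie algebra $(\g_P \otimes m_A)(\mathcal{U})_\bullet$. Note that the nilpotence of $m_A$ (together with its finite-dimensionality) guarantees that all exponentials and Baker--Campbell--Hausdorff products converge in $\g_P \otimes m_A$, irrespective of whether $\g$ itself is nilpotent, so the framework of the previous sections applies unchanged.

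First, I would set up the deformation groupoid at the cocycle level. A deformation of $P$ over $A = k \oplus m_A$ is a $G$-torsor $\widetilde{P}$ on $X \times_k \op{Spec}(A)$ equipped with an identification $\widetilde{P} \otimes_A k \cong P$. Since each $U_i$ is affine and $G$ is unipotent, $\widetilde{P}$ trivializes on each $U_i \times_k \op{Spec}(A)$ compatibly with the chosen trivialization of $P$. The deformation is then encoded by transition functions $\widetilde{\Phi}_{ij} : U_{ij} \to G \times_k \op{Spec}(A)$ reducing to $\Phi_{ij}$ mod $m_A$ and satisfying the multiplicative cocycle identity; morphisms in the deformation groupoid are given by $\widetilde{\Sigma}_i : U_i \to G \times_k \op{Spec}(A)$ reducing to the identity.

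Next, I would use $\ker(G(A) \to G(k)) = \exp(\g \otimes m_A)$ to write $\widetilde{\Phi}_{ij} = \Phi_{ij} \cdot \exp(\phi_{ij})$ and $\widetilde{\Sigma}_i = \exp(\sigma_i)$. The trivializations of $P|_{U_i}$ induce trivializations of $\g_P|_{U_i}$, with transitions given by $\op{Ad}(\Phi_{ij})$. Rewriting the multiplicative cocycle identity $\widetilde{\Phi}_{ij}\widetilde{\Phi}_{jk} = \widetilde{\Phi}_{ik}$ using this substitution, the conjugation $\Phi_{ij}^{-1}\exp(\phi_{jk})\Phi_{ij} = \exp(\op{Ad}(\Phi_{ij}^{-1})\phi_{jk})$ absorbs the torsor twist in precisely such a way that the $\phi_{ij}$ glue to a section of $\g_P \otimes m_A$ over $U_{ij}$, and the resulting equation becomes the nonabelian $1$-cocycle condition on $\phi = (\phi_{ij}) \in \Tot((\g_P \otimes m_A)(\mathcal{U})_\bullet)^1$ recalled in the proof of Theorem~\ref{importanttheorem}. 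A parallel computation identifies the $\sigma_i$ with degree $0$ elements and the change-of-trivialization action with the morphism data in $\Tot(\Del((\g_P \otimes m_A)(\mathcal{U})_\bullet))$.

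Once this dictionary is in place, applying Theorem~\ref{importanttheorem} to $(\g_P \otimes m_A)(\mathcal{U})_\bullet$ yields the claimed isomorphism of groupoids, matching $0$-, $1$-, and $2$-simplices exactly as in the proof of the preceding corollary. The main obstacle is the bookkeeping in the third paragraph: one must check carefully that the $\op{Ad}(\Phi_{ij})$ twist appearing when the multiplicative cocycle condition is converted to exponential form is exactly the twist encoded in the sheaf $\g_P$, so that the nonabelian cocycle equation for $(\phi_{ij})$ corresponds to the semicosimplicial differential on the Cech complex of $\g_P \otimes m_A$. Everything else is a direct specialization of the earlier argument.
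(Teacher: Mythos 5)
Your proposal is correct and follows exactly the route the paper intends: the paper itself only sketches this corollary (deferring details, since the later sections generalize the same computation), namely trivialize the deformation on each $U_i$, write $\widetilde{\Phi}_{ij}=\Phi_{ij}\exp(\phi_{ij})$ so that the $\op{Ad}(\Phi)$-conjugations identify the $\phi_{ij}$ with sections of $\g_P\otimes m_A$ and the multiplicative cocycle identity with the nonabelian $1$-cocycle condition, and then invoke Theorem~\ref{importanttheorem}. Your added observation that nilpotence of $m_A$ replaces nilpotence of $\g$ is precisely the remark the paper makes immediately after the corollary.
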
 
In this form the statement can be extended to a wider range of groups $G$ as 
tensoring with $m_A$ automatically creates a sheaf of nilpotent Lie algebras.

\section{Lifting $G$-torsors across extensions.}

Now we finally move to the problem discussed in the introduction. Suppose we
have an extension of nilpotent Lie algebras: 
\[0 \to \h \to \gh \to \g \to 0\]
corresponding to the extension of unipotent groups $1 \to H \to \tilde{G} \to G \to 1$. 
For a $G$-torsor $P$ on a scheme $X$ we want to study its different lifts $\tilde{P}$ to 
a $\tilde{G}$-torsor. 
To rigidify the problem, fix an open cover of $X$ and a cocycle $\Phi_{ij}: U_i \cap U_j 
\to G$ defining $P$, and study the groupoid of $\tilde{G}$-valued cocycles 
$\tilde{\Phi_{ij}}: U_i \cap U_j 
\to \tilde{G}$ lifting $\Phi_{ij}$. Morphisms between two such lifts are given by 
change of coordinate maps $\tilde{\Sigma}_j: U_j \to \tilde{G}$ which map to identity 
in $G$. This means that $\tilde{\Sigma}_j$  take
values in the subgroup $H \subset \tilde{G}$. 
We will show that our choices induce on the Cech complex $\mathcal{L}(\h)$ of 
$\h$-valued functions the structure of a \textit{curved} $L_\infty$-algebra and that the 
groupoid of lifts of $P$ is isomorphic to the (appropriately defined!) Deligne groupoid
of $\mathcal{L}(\h)$.

\subsection{Lie Algebra Extensions}
\begin{definition}
	Let $\g$ and $\h$ be two Lie algebras.  An \textbf{extension} $\gh$ of $\g$ by $\h$ is a short exact sequence of the form
	\[0 \to \h \to \gh \to \g \to 0.\]
\end{definition}

\begin{definition}
	Let $\gh$ and $\gh'$ be two extensions of $\g$ by $\h$.  $\gh$ and $\gh'$ are said to be equivalent if there exits a commutative diagram
	\[\begin{tikzcd}
		0 \arrow[r] & \h \arrow[r] \arrow[d, equal] & \gh \arrow[r] \arrow[d, "\varphi"] & \g \arrow[r] \arrow[d, equal] & 0\\
		0 \arrow[r] & \h \arrow[r] & \gh' \arrow[r] & \g \arrow[r] & 0
	\end{tikzcd}\]
\end{definition}

\begin{definition}
	A \textbf{non-abelian 2-cocycle} on $\g$ with values in $\h$ is a couple $(c, b)$ of linear maps $c: \g \wedge \g \to \h$ and $b: \g \to \op{Der}(h)$ satisfying 
	\[[b(x), b(y)] - b([x,y]) = \op{ad}(c(x,y))\]
	and
	\[\displaystyle \sum b(x, c(y,z)) - c(b(x,y), z) = 0\]
	where the sum is over cyclic permutations of $x$, $y$, and $z$.   Two non-abelian 2-cocyles are equivalent, $(c,b) \sim (c',b')$ if there exists $\beta: \g \to \h$ satisfying 
	\[b'_x = b_x + \op{ad}_{\beta(x)}\]
	and
	\[c'(x,y) = c(x,y) + b_x(\beta(y)) - b_y(\beta(x)) - \beta([x,y]) + [\beta(x), \beta(y)]\]
	\end{definition}

Choosing a vector space splitting $\tilde{\g} = \g \oplus \h$ compatible with the embedding of $\h$ and projection to $\g$, and writing out the bracket of $\gh$ gives such a non-abelian 
cocycle.  For $x \in \g$ and $y \in \h$, $b: \g \to \op{Der}(\h)$ is given by $b(x)(y)$
equal to the projection of $[x, y]_{\tilde{\g}}$ onto the $\h$-component,  and for $x,x' \in \g$, $c: \g \wedge \g \to \h$ is given by the $\h$ component of $[x,x']_{\gh}$.
A direct computation shows that extensions of $\g$ by $\h$ are classified by equivalence classes of non-abelian cocycles, which can further be reformulated as 
equivalence classes of Maurer Cartan solutions for some DGLA, see \cite{lieext}.
For our purpose, it is enough for us to know that the maps $b$ and $c$ fully describe an extension.

\subsection{Twisted Cocycles and their Equivalence}

We can identify $\tilde{G}$ with the product $G \times H$ as varieties (not groups!) 
by choosing a vector space
splitting of $\gh =  \g \oplus \h$ and thus get an embedding $G \xhookrightarrow{} \tilde{G}$ (of varieties!) by embedding $\g \xhookrightarrow{} \gh$ and using the exponential map.   Multiplication in $\tilde{G}$ is not the regular multiplication of $G \times H$ and is determined by $c$ and $b$ (via the CBH formula).
Our question is then what are the principal $\tilde{G}$-bundles $\tilde{P}$ that extend $P$.
The splitting $\tilde{G} = G \times H$  allows us to rewrite the unknown lifted
cocycle as 
$$
\tilde{\Phi}_{ij} = \Phi_{ij} \Psi_{ij} = \exp(\varphi_{ij})\exp(\psi_{ij}); \quad 
\varphi_{ij} \in \Gamma(U_{ij}, \g), \psi_{ij} \in \Gamma(U_{ij}, \h)
$$
The cocycle condition can then be rewritten as (product is taken in $\tilde{G}$)
$$
	\exp(\varphi_{ij})\exp(\psi_{ij})\exp(\varphi_{jk})\exp(\psi_{jk})
	= \exp(\varphi_{ik})\exp(\psi_{ik})
$$
Comparing with the product of  $\{exp(\varphi_{ab)}\}$ in $G$ we get
$$
	\exp(\varphi_{ij})\cdot_{\tilde{G}}\exp(\varphi_{jk})
	= (\exp(\varphi_{ij})\cdot_{G}\exp(\varphi_{jk}))\mathcal{C}(\varphi_{ij},\varphi_{jk})
	= \exp(\varphi_{ik})\mathcal{C}(\varphi_{ij},\varphi_{jk})
$$
where $\mathcal{C}(\varphi_{ij},\varphi_{jk})$ is the $H$ component of $\exp(\varphi_{ij})\cdot_{\tilde{G}}\exp(\varphi_{jk})$.  If we rewrite $\exp(\varphi_{ij})\cdot_{\tilde{G}}\exp(\varphi_{jk})$ in Lie algebra terms using the Baker-Campbell-Hausdorff formula on $\gh$, i.e. $\varphi_{ij} \ast_\gh \varphi_{jk}$, then $\mathcal{C}(\varphi_{ij},\varphi_{jk})$ is precisely the exponent of the $\h$ component of $\varphi_{ij} \ast_\gh \varphi_{jk}$.
Combining this with the fact that 
\begin{align*}
	\exp(-\varphi_{jk})\exp(\psi_{ij})\exp(\varphi_{jk}) = \exp(\sum^\infty_{s=0} (-1)^s \frac{(\op{ad}_{\varphi_{jk}})^s}{s!} (\psi_{ij})),
\end{align*}
and denoting 
\begin{equation}
 exp(\psi) ^{\varphi}= \exp\bigg(\sum^\infty_{s=0} \frac{(\op{ad}_{\varphi})^s}{s!} (\psi)
 \bigg)
\end{equation}
the twisted group cocycles extending  $P$ can then we rewritten as
\begin{equation}
		\label{twisted-cocycle}
\mathcal{C}(\varphi_{ij},\varphi_{jk}) exp(\psi_{ij})^{-\varphi_{jk}}
\exp(\psi_{jk}) = \exp(\psi_{ik}).
\end{equation}
Passing to a different lift of the same cocycle for $P$ corresponds to
\[\tilde{\Phi}_{ij} \mapsto \Sigma_{i}^{-1} \tilde{\Phi}_{ij} \Sigma_{j}
= \Phi_{ij} (\Phi_{ij}^{-1} \Sigma_i^{-1} \Phi_{ij}) \Psi_{ij} \Sigma_j
\]
where $\Sigma_{a} \in \Gamma(U_{a}, H)$.  
Writing $\Sigma_a = \exp(\sigma_a)$ and comparing the $H$ components 
written in Lie algebra terms we get that two twisted cocyles $\{\exp(\psi_{ij})\}$ and $\{\exp(\psi_{ij}')\}$ are equivalent iff there exist $\{\sigma_{a} \in \Gamma(U_{a}, \h)\}$ such that 
\begin{equation}
		\label{twisted-equivalence}
	\exp(\psi_{ij}') = \exp(-\sigma_{i})^{-\varphi_{ij}} \exp(\psi_{ij}) \exp(\sigma_j)
\end{equation} 
on all double overlaps $U_i \cap U_j$.

Thus, lifting  the cocycle of $P$ is the same as  lifting a Maurer-Cartan 
element $a$ from $\mathcal{L}(\g)$ to $\mathcal{L}(\gh)$.  Lifting a Maurer Cartan solution over a surjection is difficult in general but we use a vector space splitting $\mathcal{L}(\gh) = \mathcal{L}(\h) \oplus \mathcal{L}(\g)$ and to define a
curved $L_\infty$ algebra structure on $\mathcal{L}(\h)$.
Our goal is to show that the twisted cocycle condition (\ref{twisted-cocycle})
is the same as the curved Maurer Cartan equation on $\mathcal{L}(\h)$ and the equivalence of twisted cocycles (\ref{twisted-equivalence}) 
is the same as the equivalence of curved Maurer Cartan solutions on $\mathcal{L}(\h)$.

\section{Maurer Cartan Solutions for Thom Whitney}
\subsection{Bijection between Maurer Cartan Solutions}

Recall that he Thom-Whitney complex $\hat{\mathcal{L}}(\gh)$ of $\ghu$ has the 
structure of a DGLA .  The vector space splitting $\gh = \g \oplus \h$ induces a
splitting of complexes $\hat{\mathcal{L}}(\gh) = \hat{\mathcal{L}}(\g) \oplus \hat{\mathcal{L}}(\h)$. 
Now fix  a Maurer-Cartan solution $a \in \hat{\mathcal{L}}(\g)^1$,  and 
consider $\alpha \in \hat{\mathcal{L}}(\h)^1$. Then the 
Maurer-Cartan equation for $a + \alpha$
$$
d(a+\alpha) + \frac{1}{2}[a+\alpha, a+\alpha]_{\gh}=0
$$
reduces to 
$$
 \frac{1}{2} c(a,a) + (d + \ada)(\alpha) + \frac{1}{2}[\alpha,\alpha]_{\h} = 0
 $$
 This can be viewed as a curved Maurer-Cartan equation with respect to the 
 new (curved) differential $d_{\h}$ on $\hat{\mathcal{L}}(\h)$ given by 
 the restriction of  $d + \ada$. Setting  $C = \frac{1}{2} c(a,a)$, we obtain by 
 a straightforward computation that $d_\h^2 = [C,-]$. This means that 
 $(\hat{\mathcal{L}}(\h), C, d_{\h}, [-,-]_{\h})$ is a curved DGLA (see Appendix
 for general theory). 

For a fixed Maurer-Cartan element $a \in \hat{\mathcal{L}}(\g)$, let 
$\MC_a(\hat{\mathcal{L}}(\gh))$ be the set of Maurer Cartan solutions in 
$\hat{\mathcal{L}}(\gh)$ which have the form $a + \alpha$ with 
$\alpha \in \hat{\mathcal{L}}(\h)$. By the above discussion we have a
bijection of this set 
\[\MC_a(\hat{\mathcal{L}}(\gh)) \cong \MC(\hat{\mathcal{L}}(\h))\]
with the set $\MC(\hat{\mathcal{L}}(\h))$  of curved Maurer Cartan solutions 
for $\hat{\mathcal{L}}(\h)$.

\subsection{Equivalences of Maurer Cartan Solutions}
Recall that  an equivalence of two Maurer Cartan solutions $z, z' \in \MC(\hat{\mathcal{L}}(\gh))$ (a morphism of the corresponding groupoid) 
is an element  
$$
\tilde{z}\in \MC(\hat{\mathcal{L}}(\gh) \otimes k[s,ds]) \qquad 
\textrm{ such that }
\tilde{z}|_{s=0} = z, \qquad  \tilde{z}|_{s=1} = z'.
$$
Recall that $k[s, ds]$ is the graded commutative algebra of polynomial forms on a 
line and the tensor product $\hat{\mathcal{L}}(\gh) \otimes k[s,ds]$
has the induced DGLA structure. 
The evaluation map $\op{Eval}_{s = s_0}$ is given in Section 2.1. 

\bigskip
\noindent 
We will check that an equivalence  between Maurer-Cartan solutions
 $a+\alpha$ and $a+\alpha'$, which is constant in the first component, 
  essentially amounts to an equivalence of curved Maurer-Cartan 
 solutions $\alpha$ and $\alpha'$. 

\bigskip
\noindent
Notice that $\tilde{z} \in \hat{\mathcal{L}}(\gh) \otimes k[s,ds]$ can be written as $\tilde{a} + \tilde{\alpha}$ where $\tilde{a} \in \hat{\mathcal{L}}(\g) \otimes k[s,ds]$ and $\tilde{\alpha} \in \hat{\mathcal{L}}(\h) \otimes k[s,ds]$.

But since we are not changing the trivialization of $P$, we are only interested in the case
where  $\tilde{a} \in \hat{\mathcal{L}}(\g) \otimes k[s,ds]$ is constant
and equal to $a$. 

A straightforward check shows that  $(\hat{\mathcal{L}}(\h) \otimes k[s,ds], \tilde{d}_\h, [-,-]_{\hat{\h}})$ does in fact have a curved DGLA structure and
 that  the curved Maurer Cartan equation for $\alpha \in \hat{\mathcal{L}}(\h) \otimes k[s,ds]$ is exactly the same as the Maurer Cartan equation for $a + \alpha \in \hat{\mathcal{L}}(\gh) \otimes k[s,ds]$.

Thus, if $a + \tilde{\alpha}(s, ds)$ gives a homotopy equivalence between Maurer-Cartan 
solutions  $a+ \alpha$ and $a+ \alpha'$ then $\tilde{\alpha}(s, ds)$ gives a curved
homotopy equivalence between curved Maurer-Cartan solutions $\alpha$ and 
$\alpha'$  in $\hat{\mathcal{L}}(\h)$.

Recall that for an $L_\infty$ algebra $L$ we have a contraction 
\[
\begin{tikzcd}
	C^*(\Delta_1; L) \arrow[r,shift left] \arrow[r, leftarrow, shift right] & L \otimes \Omega_1 \arrow[loop right, leftarrow]{l}{K}
\end{tikzcd}
\]
where $K$ is the Dupont homotopy, and that application of formal Kuranishi theorem 
identifies 1-simplices of $\Delf(L)$ with the set
\[\MC(L \otimes \Omega_1, K) = \{x \in \MC(L \otimes \Omega_1)|K(x)=0\}\]
Now define $\MC_a(\hat{\mathcal{L}}(\gh) \otimes \Omega_1, K)$ as the subset of $\MC(\hat{\mathcal{L}}(\gh) \otimes \Omega_1, K)$ where elements are of the form $a + \tilde{\alpha}(s, ds)$.   Since $K(a) = 0$, $K(a + \tilde{\alpha}) = 0$ iff $K(\tilde{\alpha})=0$.  We have a bijection 
\[\MC_a(\hat{\mathcal{L}}(\gh) \otimes \Omega_1, K) \cong \MC(\hat{\mathcal{L}}(\h) \otimes \Omega_1, K).\]
(where similarly, the right hand side is defined as curved Maurer-Cartan solutions
annihilated by 
A similar argument with $\Omega_2$ shows that compositions of morphisms agree too;, i.e. if $(a + \tilde{\alpha} )\circ (a + \tilde{\alpha}' )= a + \tilde{\alpha}''$, then $\tilde{\alpha} \circ \tilde{\alpha}' = \tilde{\alpha}''$.

Denote $\Del_a(\hat{\mathcal{L}}(\gh))$ the groupoid with objects $\MC_a(\hat{\mathcal{L}}(\gh))$ and morphisms $\MC_a(\hat{\mathcal{L}}(\gh) \otimes \Omega_1, K)$, $\Del_a(\hat{\mathcal{L}}(\gh))$ is then a subgroupoid (i.e. 
subcategory) of $\Del(\hat{\mathcal{L}}(\gh))$).  The identification of $a + \alpha$ with $\alpha$ clearly gives us a morphism of groupoids, so we have an isomorphism of groupoids 
\[\Del_a(\hat{\mathcal{L}}(\gh)) \cong \Del(\hat{\mathcal{L}}(\h)).\]

\begin{remark}
	In general if $L$ is a curved $L_\infty$ algebra, $\Del(L)$ might not be well defined.   However, in our situation, if we define $\Del(\hat{\mathcal{L}}(\h))$, by mimicking $\Del$ in the non curved case, as the groupoid whose objects are $\MC(\hat{\mathcal{L}}(\h))$ and whose (opposite) morphisms $\MC(\hat{\mathcal{L}}(\h) \otimes \Omega_1, K)$, we will get a groupoid isomorphic to the subgroupoid of $\Del(\hat{\mathcal{L}}(\gh))$.  Furthermore, we can define $\Delf(\hat{\mathcal{L}}(\h))$ as $\mathcal{N}(\Del^{\op{op}}(\hat{\mathcal{L}}(\h))$.  
\end{remark}

\section{Extensions of Cocycles and Curved $L_\infty$ Maurer Cartan Solutions}

We can now put every piece together by using formal Kuranishi to relate $\MC(\hat{\mathcal{L}}(\gh))$ to the cocycles for extensions of principal $G$-bundles $P$ and relate $\MC(\hat{\mathcal{L}}(\h))$ to the Maurer Cartan set of the curved $L_\infty$ algebra $\mathcal{L}(\h)$. If $P$ is trivialized on the elements of an affine cover $\mathcal{U}$, the transition functions of $P$ give us a Maurer Cartan element $a \in \MC(\mathcal{L}(\g))$.  By the formal Kuranishi theorem, this lifts to a 
unique Maurer-Cartan solution of the Thom- Whitney algebra $\hat{\mathcal{L}}(\g))$
which is annihilated by the Dupont contraction.  To simplify the notation, we will denote both elements $a$.

Our goal is to prove a version of the groupoid isomorphism in the previous section, but
replacing Thom-Whitney DGLAs with Cech $L_\infty$-algebras. Thus we denote by denote $\MC_a(\mathcal{L}(\gh)) \subset \mathcal{L}(\gh)$ the set of Maurer Cartan solutions for the 
form $a+\alpha$, $\Del_a(\mathcal{L}(\gh))$ the groupoid with objects $\MC_a(\mathcal{L}(\gh))$ and the morphisms being those 
morphisms in the Deligne groupoid $\Del(\mathcal{L}(\gh))$ which project on 
the identity of $a$ in  $\Del(\mathcal{L}(\g))$ .

\begin{theorem}
	\label{maintheorem}
	There is an isomorphism of groupoids
	\[\Del_a(\mathcal{L}(\gh)) \cong \Del(\mathcal{L}(\h))\]
	where $\mathcal{L}(\h)$ has a curved $L_\infty$ structure 
	obtained by contraction from the curved DGLA $\hat{\mathcal{L}}(\h)$ whose differential is given by $d + \ada$ and curvature $\frac{1}{2}c(a,a)$ (see Appendix).
\end{theorem}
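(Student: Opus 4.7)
The plan is to reduce the statement to the Thom-Whitney version $\Del_a(\hat{\mathcal{L}}(\gh)) \cong \Del(\hat{\mathcal{L}}(\h))$ established in Section 6, by passing to the Cech side via the Dupont contraction and the formal Kuranishi theorem. The key structural observation is that the Dupont contraction $K$ on $\hat{\mathcal{L}}(\gh)$ acts only on simplicial coefficients and therefore respects the vector-space splitting $\gh = \g \oplus \h$; hence it splits as a direct sum of the Dupont contractions $\hat{\mathcal{L}}(\g) \to \mathcal{L}(\g)$ and $\hat{\mathcal{L}}(\h) \to \mathcal{L}(\h)$, and all the relevant sub-objects will be compatible with this decomposition.

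First I would define the curved $L_\infty$ structure on $\mathcal{L}(\h)$ by applying the curved analogue of the homotopy transfer of structure theorem (developed in the appendix) to the Dupont contraction $\hat{\mathcal{L}}(\h) \to \mathcal{L}(\h)$, starting from the curved DGLA $(\hat{\mathcal{L}}(\h), d + \ada, \tfrac{1}{2} c(a,a))$ set up in Section 6. A curved version of the formal Kuranishi theorem then identifies $\MC(\mathcal{L}(\h))$ with $\MC(\hat{\mathcal{L}}(\h)) \cap \op{Ker}\, K$. On the uncurved side, the usual formal Kuranishi theorem from Section 2.2 gives $\MC(\mathcal{L}(\gh)) \cong \MC(\hat{\mathcal{L}}(\gh)) \cap \op{Ker}\, K$; since $a$ was chosen in $\op{Ker}\, K$ to begin with, this restricts to $\MC_a(\mathcal{L}(\gh)) \cong \MC_a(\hat{\mathcal{L}}(\gh)) \cap \op{Ker}\, K$.

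The bijection on objects then follows from Section 6.1: the map $a + \alpha \mapsto \alpha$ is linear in $\alpha$, and because $K(a) = 0$ it commutes with the projection to $\op{Ker}\, K$, so it restricts to a bijection between Kuranishi-annihilated Maurer-Cartan solutions on each side. For morphisms, I would run the same strategy after tensoring with $\Omega_1$: a morphism in $\Del_a(\mathcal{L}(\gh))$ corresponds to an element of $\MC_a(\hat{\mathcal{L}}(\gh) \otimes \Omega_1) \cap \op{Ker}\, K$ which, by the argument of Section 6.2, matches an element of $\MC(\hat{\mathcal{L}}(\h) \otimes \Omega_1) \cap \op{Ker}\, K$, i.e.\ a morphism in $\Del(\mathcal{L}(\h))$. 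Composition compatibility then follows by the analogous argument with coefficients in $\Omega_2$.

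The main technical obstacle is the curved analogue of homotopy transfer and of the formal Kuranishi theorem needed to endow $\mathcal{L}(\h)$ with a curved $L_\infty$ structure whose curved Maurer-Cartan equation corresponds, under the bijection above, to the ordinary $L_\infty$ Maurer-Cartan equation for $a + \alpha$ in $\mathcal{L}(\gh)$. One must verify that the recursive Kuranishi expansion still converges in the complete topology in the presence of a nontrivial zero-ary term, and that the transferred multibrackets satisfy the curved generalized Jacobi identities with the transferred curvature. These verifications are collected in the appendix; the essential point is that the curvature $q_0$ must be threaded through every step of the transfer formulas and tracked carefully in each Kuranishi recursion.
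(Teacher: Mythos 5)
Your proposal follows essentially the same route as the paper: both define the curved structure on $\mathcal{L}(\h)$ by curved homotopy transfer along the Dupont contraction, chain the curved and uncurved formal Kuranishi bijections through the Thom--Whitney splitting $\hat{\mathcal{L}}(\gh) = \hat{\mathcal{L}}(\g) \oplus \hat{\mathcal{L}}(\h)$ using $\tilde{K}(a)=0$, and handle morphisms and composition by repeating the argument with coefficients in $\Omega_1$ and $\Omega_2$. The only cosmetic difference is that the paper is slightly more explicit in tracking the two separate homotopies (the Dupont homotopy $\tilde{K}\otimes\id$ on coefficients and the homotopy $K$ contracting the $\Omega_1$ factor) and the sets of Maurer--Cartan elements annihilated by both, but this is exactly the bookkeeping your outline implies.
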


Before we start the proof, recall that 
 $L_\infty$ structure on  $\mathcal{L}(\f)$ is obtained by homotopy transfer from $\hat{\mathcal{L}}(\f)$ with homotopy $\tilde{K}$ which 
  is termwise the Dupont homotopy \cite{getzler}.
The formal Kuranishi theorem gives us $\MC(\hat{\mathcal{L}}(\f),\tilde{K}) \cong \MC(\mathcal{L}(\f))$
where $\MC(\hat{\mathcal{L}}(\f),\tilde{K}) = \{z \in \MC(\hat{\mathcal{L}}(\f))|\, \tilde{K}(z) = 0\}$.

Similarly, the formal Kuranishi theorem applied to the contraction 
\[
\begin{tikzcd}
	\mathcal{L}(\f)\otimes \Omega_\bullet \arrow[r,shift left] \arrow[r, leftarrow, shift right] &\hat{\mathcal{L}}(\f) \otimes \Omega_\bullet \arrow[loop right, leftarrow]{l}{\tilde{K} \otimes \op{id}}
\end{tikzcd}
\]
gives a simplicial isomorphism 
$\MC(\hat{\mathcal{L}}(\f) \otimes \Omega_\bullet , \tilde{K}\otimes \id) \cong \MC(\mathcal{L}(\f)\otimes \Omega_\bullet)$. 
Note that curved homotopy transfer of structure theorem and thus curved formal Kuranishi theorem only apply when we have a correct filtration on our complexes.  We will discuss this in more detail as a remark when we apply the curved homotopy transfer of structure theorem in the proof.

\begin{proof}
	Define $\MC_a(\hat{\mathcal{L}}(\gh), \tilde{K})$ as the set of Maurer Cartan solutions $a+\alpha \in \MC(\hat{\mathcal{L}}(\gh))$, where  $\alpha \in \hat{\mathcal{L}}(\h)$, such that $\tilde{K}(a + \alpha) = 0$.  By the remarks before the proof and the 
	previous section  we have
	\[\MC_a(\mathcal{L}(\gh)) \cong 
	\MC_a(\hat{\mathcal{L}}(\gh), \tilde{K}) \cong \MC(\hat{\mathcal{L}}(\h),\tilde{K})
	 \cong \MC(\mathcal{L}(\h))
	\]
	where the last bijection follows by applying  the curved  formal Kuranishi theorem
	(see  \cite{fukaya}, \cite{getzlercurved} and Section 8.2 below) to the contraction
	\[
	\begin{tikzcd}
		\hat{\mathcal{L}}(\h) \arrow[r,shift left] \arrow[r, leftarrow, shift right] &\mathcal{L}(\h) \arrow[loop right, leftarrow]{l}{\tilde{K}}
	\end{tikzcd}
	\]
	Thus, the Maurer Cartan solutions are in bijection.
	
	For the arrows (equivalences), we consider two contractions 
	\[
	\begin{tikzcd}
		C^*(\Delta_1; \hat{\mathcal{L}}(\gh)) \arrow[r,shift left] \arrow[r, leftarrow, shift right] & \hat{\mathcal{L}}(\gh) \otimes \Omega_1 \arrow[loop right, leftarrow]{l}{K}
	\end{tikzcd}
	\]
	\[
	\begin{tikzcd}
		\mathcal{L}(\gh)\otimes \Omega_1 \arrow[r,shift left] \arrow[r, leftarrow, shift right] &\hat{\mathcal{L}}(\gh)\otimes \Omega_1 \arrow[loop right, leftarrow]{l}{\tilde{K} \otimes \op{id}}
	\end{tikzcd}
	\]
	Notice that $K$ is a contraction that contracts the $\Omega_1$ part without changing the coefficients and $\tilde{K}\otimes \id$ contracts the coefficients without changing the $s, ds \in \Omega_1 = k[s,ds]$. We will denote by $\MC(\ldots, K, \tilde{K}\otimes \id)$
	and $\MC_a(\ldots, K, \tilde{K}\otimes \id)$ subsets of Mauer-Cartan solutions 
	annihilated by both homotopies (we apply this to sets arising from $\gh$ and 
	their curved analogues arising from $\h$). Then
	\[ 	\MC_a(\hat{\mathcal{L}}(\gh) \otimes \Omega_1, \tilde{K}\otimes \id) \cong 
	\MC_a(\mathcal{L}(\gh) \otimes \Omega_1)\]
	and hence 
	\[\MC_a(\mathcal{L}(\gh) \otimes \Omega_1, K)  \cong \MC_a(\hat{\mathcal{L}}(\gh) \otimes \Omega_1, K, \tilde{K}\otimes \id).\]
	The previous section implies that the set on the right can be identified with 
	$ \MC(\hat{\mathcal{L}}(\h) \otimes \Omega_1, K, \tilde{K}\otimes \id)$, which by a
	similar reasoning applied to $\h$ is bijective to $\MC(\mathcal{L}(\h)\otimes \Omega_1, K)$. Thus we get a bijection 
	\[\MC_a(\mathcal{L}(\gh) \otimes \Omega_1, K) \cong \MC(\mathcal{L}(\h)\otimes \Omega_1, K).\]
	
	Now suppose $a + \tilde{\alpha} \in \MC_a(\mathcal{L}(\gh) \otimes \Omega_1, K)$ 
	gives an equivalence of two  Maurer Cartan solutions $a + \alpha$,  $a + \alpha' \in \MC_a(\mathcal{L}(\gh))$.
	Because of the bijections $\MC_a(\mathcal{L}(\gh)) \cong \MC(\mathcal{L}(\h))$ and $\MC_a(\mathcal{L}(\gh) \otimes \Omega_1, K) \cong \MC(\mathcal{L}(\h)\otimes \Omega_1, K)$, we can uniquely lift $a + \alpha$, $a + \alpha'$, and $a + \tilde{\alpha}$ in $\MC(\mathcal{L}(\h))$ and $\MC(\mathcal{L}(\h)\otimes \Omega_1, K)$ respectively.  Note that after the lift we still have $\tilde{\alpha}|_{s=0} = \alpha$ and $\tilde{\alpha}|_{s=1} = \alpha'$.  The lifts agree because Getzler defines them as solutions to differential equations with initial conditions.  Since the bijection between 
	\[\MC_a(\hat{\mathcal{L}}(\gh) \otimes \Omega_1, K) \cong \MC(\hat{\mathcal{L}}(\h) \otimes \Omega_1, K)\]
	respects composition as shown in the last section, and the homotopy transfer 
	also respects composition of morphisms are respected,  we have an isomorphism of groupoids
	$\Del_a(\mathcal{L}(\gh)) \cong \Del(\mathcal{L}(\h))$
\end{proof}

\begin{remark}
	Note that both totalizations of  a semicosimplicial Lie algebra $L_\bullet$, $\TW(L_\bullet)$ and $\Tot(L_\bullet)$, are equipped with the filtrations
	\[F^{-1} \subset F^{0}\subset F^{1}\subset \dots\]
	When $L_\bullet$ arises from a sheaf of Lie algebras and an open cover
	$F^i$ is given by direct sums of terms coming from sections on $\geq (i+2)$ 
	overlapping open sets. These filtrations are complete. 
	
	The curved $L_\infty$ structure on $\hat{\mathcal{L}}(\h)$, in Getzler's sense, must then be in $F^1S^1(\hat{\mathcal{L}}(\h),\hat{\mathcal{L}}(\h))$ as $C$ is an element in the triple intersection, $d$ and $[-,-]$ are degree 1 maps in $\hat{\mathcal{L}}(\h)[1]$.  This means that $\hat{\mathcal{L}}(\h)$ is pro-nilpotent and thus we can apply curved homotopy transfer of structure theorem and curved formal Kuranishi theorem.  See Appendix for details on curved $L_\infty$ algebras.
\end{remark}

\begin{corollary}
	Suppose we have a principal $G$-bundle $P$ over the base space $X$, $\pi: P \to X$, and $G = \exp(\g)$, where $G$ is unipotent and $\g$ is nilpotent.  Suppose we have a Lie algebra extension $\gh$ of $\g$ by another nilpotent Lie algebra $\h$ 
	\[0 \to \h \to \gh \to \g \to 0,\]
	then extensions of the principal $G$-bundle $P$ are given by the curved Maurer Cartan solutions $\MC(\mathcal{L}(\h))$ and the equivalence of extension is precisely the equivalence of curved Maurer Cartan solutions, i.e. the solution for the twisted cocycle condition (\ref{twisted-cocycle}) 
	is in bijection with the curved Maurer Cartan solutions for $\mathcal{L}(\h)$ and the twisted equivalence (\ref{twisted-equivalence})
	is in bijection with the morphisms between curved Maurer Cartan solution for $\mathcal{L}(\h)$ and this bijection respects composition of morphisms (change of trivialization).
\end{corollary}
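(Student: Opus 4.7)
The plan is to combine the geometric rewriting of Section 5.2 with the groupoid isomorphism of Theorem \ref{maintheorem}. First I would fix the data: the affine cover $\mathcal{U}$, the cocycle $\Phi_{ij} = \exp(\varphi_{ij})$ defining $P$, and the vector space splitting $\gh = \g \oplus \h$. The cocycle $\varphi = (\varphi_{ij})$ is a Maurer--Cartan element $a \in \MC(\mathcal{L}(\g))$ by the Corollary following Theorem \ref{importanttheorem}, and lifts of $a$ to $\MC(\mathcal{L}(\gh))$ are exactly elements of $\MC_a(\mathcal{L}(\gh))$ in the sense of Theorem \ref{maintheorem}.

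Next, I would match the geometry with the algebra on the nose. Writing an arbitrary $\tilde G$-valued cocycle extending $\Phi_{ij}$ as $\tilde \Phi_{ij} = \exp(\varphi_{ij})\exp(\psi_{ij})$ and comparing the $H$-components via the Baker--Campbell--Hausdorff formula on $\gh$, the cocycle condition on $\tilde\Phi$ becomes exactly (\ref{twisted-cocycle}), as derived in Section 5.2. By the same derivation applied at the Lie algebra level, this is precisely the statement that $a + \alpha$, with $\alpha = (\psi_{ij})$, is a Maurer--Cartan element of $\mathcal{L}(\gh)$: the $\g$-component of the Maurer--Cartan equation is automatically satisfied because $a \in \MC(\mathcal{L}(\g))$, and the $\h$-component reproduces (\ref{twisted-cocycle}). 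This gives a bijection between $\tilde G$-valued lifts of $\Phi_{ij}$ and objects of $\Del_a(\mathcal{L}(\gh))$.

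For morphisms, I would repeat the same analysis using $\Sigma_i = \exp(\sigma_i)$ with $\sigma_i \in \Gamma(U_i, \h)$ (so that trivializations of $P$ are not disturbed). The identity $\tilde\Phi_{ij}' = \Sigma_i^{-1} \tilde\Phi_{ij} \Sigma_j$, after projecting onto the $\h$-component in coordinates compatible with the splitting $\gh = \g \oplus \h$, is exactly (\ref{twisted-equivalence}); and on the algebraic side it is the description of morphisms in $\Del_a(\mathcal{L}(\gh))$ that project to the identity on $a$ in $\Del(\mathcal{L}(\g))$. Composition of morphisms corresponds on both sides to BCH-multiplication of the $\sigma_i$'s, so the bijection is functorial. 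Combining all this gives an isomorphism of groupoids between the groupoid of $\tilde G$-lifts of the cocycle $\Phi$ and $\Del_a(\mathcal{L}(\gh))$.

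Finally I would invoke Theorem \ref{maintheorem} to conclude
\[
\Del_a(\mathcal{L}(\gh)) \;\cong\; \Del(\mathcal{L}(\h))
\]
where $\mathcal{L}(\h)$ carries the curved $L_\infty$ structure obtained by transferring the curved DGLA structure on $\hat{\mathcal{L}}(\h)$ (curvature $\tfrac{1}{2} c(a,a)$, twisted differential $d + \ada$) along the Dupont contraction. The main obstacle in this plan is bookkeeping rather than conceptual: one must check that the isomorphism constructed in Theorem \ref{maintheorem} sends a curved Maurer--Cartan element $\alpha \in \MC(\mathcal{L}(\h))$ to precisely the element $(\psi_{ij})$ defined by the lifted cocycle $\tilde\Phi_{ij} = \exp(\varphi_{ij})\exp(\psi_{ij})$, and similarly that the morphism side respects the explicit formulas (\ref{twisted-equivalence}). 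This is where the compatibility between the choice of Dupont homotopy, the BCH formula on $\gh$, and the vector space splitting $\gh = \g \oplus \h$ becomes crucial; the computation reduces, as in Section 6, to the observation that the formal Kuranishi lift respects the splitting because $a$ is annihilated by the Dupont homotopy. Once this identification is pinned down, the corollary follows immediately.
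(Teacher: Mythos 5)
Your proposal is correct and follows essentially the same route as the paper: the paper's proof simply cites Theorem \ref{maintheorem} together with Theorem \ref{importanttheorem}, and your argument is an expanded version of exactly that — using the Section 5.2 rewriting and the Fiorenza--Manetti--Martinengo identification to match lifts of the cocycle with $\Del_a(\mathcal{L}(\gh))$, then applying Theorem \ref{maintheorem}. The extra care you take about the compatibility of the Kuranishi lift with the splitting is a useful elaboration but not a departure from the paper's approach.
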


\begin{proof}
	Result follows directly from Theorem \ref{maintheorem} and \ref{importanttheorem}.
\end{proof}

\begin{example}
	In the case where the image of $c$ is in the center of $\h$, i.e. $c(x,y) \in \op{Z}(\h) \, \forall x,y \in \g$, we will have an honest action of $\g$ ($G$) on $\h$. The extensions of the bundle $P$ are then given by the curved $L_\infty$ Cech complex, $\mathcal{L}(\h)$, of the associated bundle $P_{\h} = (P \times \h)/G$ (which a bundle of Lie algebras) where its curvature is given by $c$.   
\end{example}

\section{Appendix: curved homotopy  transfer}

\subsection{Definitions} 

\begin{definition}
	Let $L$ be a complete graded vector space; a codifferential $Q$ of degree 1 on the symmetric coalgebra $S(L[1]) = \displaystyle \bigoplus_{n \geq 0} \bigodot^n L$ is called a \textbf{curved} $\mathbf{L_\infty}$ \textbf{structure} on $L$.  A curved $L_\infty$ algebra is a complete graded space $(L, F^{\bullet} L, d)$ together with a curved $L_\infty$ structure $Q$ on $L$.
\end{definition}
As in the non curved case, $Q$ is determined by $Q^1: S(L[1]) \to L$.  The maps $q_i = Q^1_i: \bigodot^n L \to L$ give us the (higher) brackets on $L[1]$; $q_0: k \to L$ in particular gives us the \textbf{curvature} element.  The series of equations (general Jacobi identities) given by $Q^2 = 0$ are different from the non curved case as we have to take into account the $q_0$.  In particular, we have
$$
	q_1^2(x) = q_2(q_0,x), \quad 
	q_1(q_0) = 0 \qquad \qquad x\in L.
$$
Thus, $q_1$ is no longer a differential for $L$ and its cohomology is not well defined.  $S(L[1])$ is equipped with a coaugmentation $\eta: k \to S(L[1])$.  When  $Q\eta = 0$, we have $q_0 = 0$ and we recover non curved $L_\infty$ algebras.

\begin{definition}
	A curved $L_\infty$ morphism $F: (L,Q) \to (M,R)$ between curved $L_\infty$ algebras is a morphism $F: S(L[1]) \to S(M[1])$ that commutes with the coproducts, counits, and codifferentials Q and R.
\end{definition}
Again, $F$ is determined by $F^1_i = f_i: \bigodot^i L[1] \to M[1]$ and $F$ can be computed in a fashion similar to the non curved case (but $i \geq 0$).

\begin{definition}
	For a curved $L_\infty$ algebra $(L,Q)$ 
	\[\sum_{n=0}^\infty \frac{1}{n!} q_n(x, \dots, x) = 0 \qquad x\in L^1.\]
	is called the Maurer-Cartan equation. 
   Its solutions form a (possibly empty!)
	\textbf{Maurer-Cartan set}  $MC(L)$ of the curved $L_\infty$ algebra \(L\).
	Two Maurer-Cartan solutions $a,a' \in \MC(L)$ are \textbf{(homotopy) equivalent} if there exist $z \in \MC(L\otimes k[s,ds])$ such that
	$z|_{s=0} = a,  z|_{s=1} = a'$
	where the evaluation map $\op{Eval}_{s = s_0}: L \otimes k[s,ds] \to L$ is given by 
	\[\op{Eval}_{s=s_0}(x(s)+y(s)ds) = x(s_0)\]
\end{definition}

\subsection{Curved Homotopy Transfer and Kuranishi Theorem}
Here we
 state the main result of Getzler's paper \cite{getzlercurved}, the curved version of formal Kuranishi theorem. For complete filtered complexes $L$, $M$ let $S^i(L, M)$ be the
 set of sequences $(a_0, a_1, \ldots)$ where $a_0 \in F_1 M$ and for $n \geq 1$ each 
 $a_n$ is a filtered graded symmetric $n$-linear map from $L$ to $M$ of degree $i$. 
 It carries a filtration with $F_kS^i$ given by multilinear maps that deepen the filtration 
 at least by $k$ steps.

\begin{definition}
	A curved $L_\infty$ algebra $(L, \lambda)$ is \textbf{pro-nilpotent} if $\lambda
	\in F_1S^1(L, L)$.
\end{definition}

\begin{definition}
	Given $a \in S^i(L,M)$ and $b \in S^0(K,L)$, define the composition $a \bullet b \in S^i(K,M)$ by
	\begin{align*}
		(a \bullet b)_n(x_1,\dots,x_n) =& 	\sum_{\sigma \in S_n} \op{sgn}(\sigma)	 \times
		\\ \times 
\sum_{k=0}^n \frac{1}{k!} \sum_{n_1 + \dots + n_k = n}  \frac{1}{n_1!\dots n_k!}a_k(b_{n_1} &(x_{\sigma(1)},\dots), \dots, b_{n_k}(\dots, x_{\sigma(n)}))
	\end{align*}
\end{definition}

For $\bullet$ to be well defined, we need  $b_0 \in F^1L$. The following theorem was originally shown by Fukaya and stated in the current form by Getzler:

\begin{theorem}[Fukaya, \cite{fukaya}\cite{getzlercurved}]
	\label{curvedhomotopytransfer}
	Consider a complete contraction of filtered complexes
	$
	\begin{tikzcd}
		M \arrow[r,shift left, "f"] \arrow[r, leftarrow, shift right, "g", swap] & L \arrow[loop right, leftarrow]{l}{h}
	\end{tikzcd}
	$
	with continuous  $f$ and $g$.  Suppose $L$ is equipped with a pro-nilpotent curved $L_\infty$ structure $\lambda$.  Then there is a unique solution in $S^0(M,L)$ of the fixed-point equation
	\[F = f - h\lambda \bullet F.\]
	Furthermore, $\mu = g\lambda \bullet F \in S^1(M,M)$ is a curved $L_\infty$ structure on $M$, and $F$ is a curved $L_\infty$ morphism from $(M,F^\bullet M, d,\mu)$ to $(L, F^\bullet L, \delta, \lambda)$.
\end{theorem}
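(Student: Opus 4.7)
The plan is to establish in turn: (i) existence and uniqueness of $F$ by a contraction-mapping-type iteration in the filtration; (ii) that $\mu = g\lambda \bullet F$ satisfies the curved Maurer--Cartan equation $\mu \bullet \mu = 0$; and (iii) that $F$ intertwines $\mu$ and $\lambda$ in the sense required of a curved $L_\infty$ morphism. Throughout, the tools will be the contraction side conditions $gf = \mathrm{id}_M$, $gh = 0$, $h^2 = 0$, $hf = 0$, and the homotopy identity $[d,h] = fg - \mathrm{id}_L$, together with the master identity $\lambda \bullet \lambda = 0$ encoding that $\lambda$ is a curved $L_\infty$ structure on $L$.

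For step (i), pro-nilpotence $\lambda \in F_1 S^1(L,L)$ ensures that the operator $\Phi(G) = f - h\lambda \bullet G$ on $S^0(M,L)$ strictly deepens differences in filtration: if $G_1 - G_2 \in F_k S^0(M,L)$ then $\Phi(G_1) - \Phi(G_2) \in F_{k+1} S^0(M,L)$, because each summand of $\lambda \bullet G$ other than the curvature produces at least one $\lambda$-factor which deepens the filtration by one. Starting from $F^{(0)} = f$ and iterating $F^{(n+1)} = \Phi(F^{(n)})$, the approximants form a Cauchy sequence in the filtration topology, and completeness of $M$ and $L$ gives a unique limit $F$ solving the fixed-point equation. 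This is the filtered analogue of the Banach fixed-point theorem.

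For step (ii), I would expand $\mu \bullet \mu = (g\lambda \bullet F) \bullet (g\lambda \bullet F)$ using an appropriate version of associativity of $\bullet$ and substitute the fixed-point equation $F = f - h\lambda \bullet F$. The leading contribution should rearrange into $g\,(\lambda \bullet \lambda) \bullet (\cdots) = 0$, and the remaining cross-terms ought to cancel using $gh = 0$, $h^2 = 0$, and the homotopy identity $fg - \mathrm{id}_L = [d,h]$. The argument is a curved analogue of the standard homological perturbation computation; the extra subtlety is that the curvature component $q_0$ of $\lambda$ is now nontrivial and propagates through every term, so one must keep track of it carefully rather than discarding it as in the classical proof.

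For step (iii), the morphism condition reduces to the equality $\lambda \bullet F = F \bullet \mu$ between elements of $S^1(M,L)$. Substituting $\mu = g\lambda \bullet F$ on the right and invoking associativity gives $F \bullet \mu = F \bullet (g\lambda \bullet F)$, while on the left the identity $fg = \mathrm{id}_L + [d,h]$ lets us rewrite $fg(\lambda \bullet F)$ in terms of $\lambda \bullet F$ up to a boundary involving $h$. The tail term $-h\lambda \bullet F$ in $F = f - h\lambda \bullet F$ is designed precisely to furnish the compensating contribution. I expect this step to be the main obstacle: the fixed-point equation must be used essentially rather than simply as a recursion, and the combinatorics of the operadic $\bullet$-composition (symmetrizations, Koszul signs, treatment of the $q_0$ component) is where the computation is most delicate and where sign errors are most likely to creep in.
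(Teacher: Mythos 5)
The paper does not actually prove this theorem: it is quoted from Fukaya and Getzler, and the only piece of argument the paper supplies is the remark immediately following the statement, namely that pro-nilpotence makes $G \mapsto f - h\lambda \bullet G$ a contraction mapping for the metric $d_c(x,y) = \inf\{c^{-k} \mid x-y \in F^kL\}$, so that completeness yields a unique fixed point. Your step (i) reproduces exactly this, and it is correct: the only point worth making explicit is that every term of $(\lambda \bullet G_1 - \lambda \bullet G_2)_n$ contains at least one factor from $\lambda \in F_1S^1(L,L)$ applied to a difference $G_1-G_2$, which is what strictly deepens the filtration.

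Steps (ii) and (iii), however, are declarations of intent rather than proofs ("ought to cancel," "I expect this step to be the main obstacle"), and they contain a structural problem as written: the composition $a \bullet b$ is only defined when the right-hand factor $b$ lies in $S^0$, with $b_0 \in F^1L$, so the expressions $\mu \bullet \mu$ and $F \bullet \mu$ with $\mu \in S^1(M,M)$ are not well-formed in the formalism you are using. The actual verification in Fukaya's and Getzler's arguments is carried out at the level of the symmetric coalgebra: $\mu$ corresponds to a degree-one coderivation $Q_\mu$ of $S(M[1])$ and one must check $Q_\mu^2 = 0$, while $F$ corresponds to a coalgebra morphism $S(M[1]) \to S(L[1])$ that must intertwine $Q_\mu$ with $Q_\lambda$; equivalently one works with a pre-Lie-type extension of $\bullet$ to $S^1$-valued arguments. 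Your intended cancellations via $gh = h^2 = hf = 0$ and $[d,h] = fg - \id$ are the right ingredients, but until the identities are set up in a framework where both sides are defined, and until the convergence of the resulting infinite sums is again justified by pro-nilpotence, steps (ii) and (iii) remain a gap rather than a proof. Since the paper itself defers to the cited sources for precisely this part, your proposal is consistent with, but does not go beyond, what the paper records.
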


Pro-nilpotence is needed 
 to make $F \mapsto f - h\lambda \bullet F$ a contraction mapping under the metric 
$d_c(x,y) = \op{inf}\{c^{-k}|\, x-y \in F^kL\}$
where $c \in \mathbb{R}^{> 1}$.  
\begin{theorem}[Getzler, \cite{getzlercurved}]
	Under the same setting as in Theorem \ref{curvedhomotopytransfer}, the morphism $g$ induces a bijection from $\MC(L,h) \to \MC(M)$.
\end{theorem}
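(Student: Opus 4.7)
The plan is to construct an inverse to $g \colon \MC(L,h) \to \MC(M)$ using the curved $L_\infty$-morphism $F$ produced by Theorem \ref{curvedhomotopytransfer}: define $\Phi(y) := \sum_{n \geq 0} \tfrac{1}{n!} F_n(y^{\odot n})$, which converges because $F_0 \in F^1 L$ and the $F_n$ are continuous, and which lands in $\MC(L)$ automatically because $F$ is a curved $L_\infty$-morphism. Applying $h$ coefficient-by-coefficient to the fixed-point equation $F = f - h\lambda \bullet F$ and using the side conditions $hf = 0$ and $h^2 = 0$ gives $hF_n = 0$ for every $n \geq 0$, so $\Phi$ actually lands in $\MC(L,h)$. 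Applying $g$ instead and using $gh = 0$ gives $gF = gf$ termwise; since $f$ is strict, this means $gF_1 = \id_M$ and $gF_n = 0$ for $n \neq 1$, yielding $g \circ \Phi = \id_{\MC(M)}$.

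The remaining and hardest step is $\Phi \circ g = \id_{\MC(L,h)}$. Given $x \in \MC(L,h)$ I would set $y := g(x)$ and show that $x$ is characterized as the unique element of $\ker h \subset L$ with $g$-projection $y$ satisfying the curved Maurer-Cartan equation $\mathcal{R}_\lambda(x) = 0$. Splitting $\mathcal{R}_\lambda$ into its linear and nonlinear parts and using the homotopy identity $fg - \id_L = dh + hd$ together with $hx = 0$, one rewrites this constraint as a fixed-point equation of the shape
\[ z = f(y) + h \, N_\lambda(z) \]
on $\ker h$, where $N_\lambda$ collects the terms coming from the curvature $\lambda_0$ and the higher brackets $\lambda_{\geq 2}$. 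Pro-nilpotence of $\lambda$ makes $z \mapsto f(y) + h N_\lambda(z)$ a contraction for the $d_c$-metric of the filtration, exactly as in the proof of Theorem \ref{curvedhomotopytransfer}, so it has a unique fixed point in $\ker h$; this fixed point is $\Phi(y)$ by the same Banach iteration that defines $F$. Hence $x = \Phi(y)$, and combined with the first two steps this gives the desired bijection between $\MC(L,h)$ and $\MC(M)$ with mutually inverse maps $g$ and $\Phi$.

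The main obstacle is setting up the fixed-point equation cleanly: one must match the linear-in-$z$ part with the contraction data $d, f, h$ so that $\ker h$ is preserved by the iteration (which relies on $hf = 0$ and $h^2 = 0$) and so that $N_\lambda$ is genuinely contractive in the $d_c$-metric, which is exactly where pro-nilpotence of $\lambda$ enters. Once this point is in hand, the first two steps are formal consequences of the side conditions of the contraction, and Banach uniqueness of the fixed point glues everything together.
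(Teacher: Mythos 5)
The paper offers no proof of this statement --- it is quoted from Getzler \cite{getzlercurved} --- so there is nothing internal to compare against. Measured against the standard argument (which is also how the uncurved formal Kuranishi theorem of Section 2.2 is proved), your proposal is correct and follows essentially that route: the inverse is the Maurer--Cartan pushforward $\Phi=F_*$ along the transferred curved morphism; the side conditions $hf=h^2=gh=0$ applied to $F=f-h\lambda\bullet F$ give $hF=0$ and $gF=gf$, hence $\Phi$ lands in $\ker h$ and $g\circ\Phi=\id$; and uniqueness of the fixed point of a filtration-deepening iteration identifies $x$ with $F_*(g(x))$ for $x\in\MC(L,h)$. One point you should make explicit: you never verify that $g$ actually sends $\MC(L,h)$ into $\MC(M)$ for the \emph{transferred} structure $\mu$, which is needed even to state $\Phi\circ g=\id_{\MC(L,h)}$ and does not follow from $g$ being a chain map alone. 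It does follow from your own step 4: since $\mu=g\lambda\bullet F$ and $gF_*(y)=y$, one has $\mathcal{R}_\mu(y)=g\,\mathcal{R}_\lambda(F_*(y))$, so once $x=F_*(g(x))$ is established, $\mathcal{R}_\lambda(x)=0$ forces $\mathcal{R}_\mu(g(x))=0$. Finally, the sign in your fixed-point equation $z=f(y)+h\,N_\lambda(z)$ must be matched to the convention $hd+dh=fg-\id$ used in this paper versus the one in \cite{getzlercurved}; that is bookkeeping, not a gap.
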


\bibliographystyle{plain}
\bibliography{VBandKLCpaper}

\end{document}